  \def\corref#1{}%
\crefname{figure}{figure}{figures}
\newtheorem{thm}{Theorem}
\newtheorem{pps}[thm]{Proposition}
\newcommand{\norm}[1]{{\left|#1\right|}}
\newcommand{\normm}[1]{{\left\|#1\right\|}}
\newcommand{\kh}[1]{{\left(#1\right)}}
\newcommand{\zkh}[1]{{\left[#1\right]}}
\newcommand{\dkh}[1]{{\left\{#1\right\}}}
\newcommand{\DEF}{\coloneqq}
\newcommand{\bE}{{\mathbb{E}}}
\newcommand{\bR}{{\mathbb{R}}}
\newcommand{\bS}{{\mathbb{S}}}
\newcommand{\bM}{{\mathbb{M}}}
\newcommand{\bW}{{\mathbb{W}}}
\newcommand{\bN}{{\mathbb{N}}}
\newcommand{\cT}{{\mathcal{T}}}
\newcommand{\cS}{{\mathcal{S}}}
\newcommand{\cV}{{\mathcal{V}}}
\newcommand{\cX}{{\mathcal{X}}}
\newcommand{\cG}{{\mathcal{G}}}
\newcommand{\cK}{{\mathcal{K}}}
\newcommand{\cZ}{{\mathcal{Z}}}
\newcommand{\vc}{{\boldsymbol c}}
\newcommand{\mA}{\mathsf A}
\newcommand{\fA}{{\mathsf{A}}}
\newcommand{\fB}{{\mathsf{B}}}
\newcommand{\fG}{{\mathsf{G}}}
\newcommand{\mlc}[1]{\begin{tabular}{@{}c@{}}#1\end{tabular}}
\newcommand{\cov}{\text{Cov}}
\newcommand{\code}[1]{{\fontfamily{qcr}\selectfont #1}}
\title{Fast Multiscale Functional Estimation in Optimal EMG Placement for Robotic Prosthesis Controllers}
\author{Jin Ren$^\ast$}
\address{Department of Mathematics \& Statistics, Old Dominion University, Old Dominion University, USA}
\email{jren@odu.edu}
\author{Guohui Song$^\ast$}
\address{Department of Mathematics \& Statistics, Old Dominion University, Old Dominion University, USA}
\email{gsong@odu.edu}
\author{Lucia Tabacu$^{\ast\dagger}$}
\address{Department of Mathematics \& Statistics, Old Dominion University, Old Dominion University, USA}
\email{ltabacu@odu.edu}
\thanks{$^\dagger$ Corresponding author}
\author{Yuesheng Xu$^\ast$}
\address{Department of Mathematics \& Statistics, Old Dominion University, Old Dominion University, USA}
\email{y1xu@odu.edu}
\begin{document}
\begin{abstract}
Electrocardiogram (EMG) signals play a significant role in decoding muscle contraction information for robotic hand prosthesis controllers. Widely applied decoders require large amount of EMG signals sensors, resulting in complicated calculations and unsatisfactory predictions. By the biomechanical process of single degree-of-freedom human hand movements, only several EMG signals are essential for accurate predictions. Recently, a novel predictor of hand movements adopts a multistage Sequential, Adaptive Functional Estimation (SAFE) method based on historical Functional Linear Model (FLM) to select important EMG signals and provide precise projections. 

However, SAFE repeatedly performs matrix-vector multiplications with a dense representation matrix of the integral operator for the FLM, which is computational expansive. Noting that with a properly chosen basis, the representation of the integral operator concentrates on a few bands of the basis, the goal of this study is to develop a {\it fast} Multiscale SAFE (MSAFE) method aiming at reducing computational costs while preserving (or even improving) the accuracy of the original SAFE method. Specifically, a multiscale piecewise polynomial basis is adopted to discretize the integral operator for the FLM, resulting in an {\it approximately sparse} representation matrix, and then the matrix is truncated to a sparse one. This approach not only accelerates computations but also improves robustness against noises. When applied to real hand movement data, MSAFE saves 85\%$\sim$90\% computing time compared with SAFE, while producing better sensor selection and comparable accuracy. In a simulation study, MSAFE shows stronger stability in sensor selection and prediction accuracy against correlated noise than SAFE.
\end{abstract}

{}

\maketitle
\section{Introduction}

This paper aims at developing a fast computing algorithm for the adaptive functional estimation method for robotic hand prosthesis controllers.
Robotic hand prostheses equipped with a prosthesis controller (PC), such as DEKA arm system \cite{resnik2011development,resnik2011using}, could emulate the functionality of an intact hand and assist transradial amputees (TRA) in their daily life activities. Electrodes are placed on multiple muscles of the residual limb to collect electromyogram (EMG) signals, which contain the information of muscle contraction magnitude and duration. For an able-bodied person the muscle contractions activate the tendons and the bones to produce the hand movement. For a transradial person the residual muscles can still contract and the prosthesis controller decodes the EMG signals to produce the hand prosthesis movements. However, for a transradial person many muscles are not accessible for external EMG sensors placement. The key challenges are deciding on the number/places for the EMG sensors and modeling the decoding of the EMG signals to movement.

A popular approach for prosthesis control is EMG pattern recognition, which usually uses redundant \cite{Huang2010} or high-density electrodes \cite{resnik2018pattern} to capture sufficient neural information. Classification techniques are then employed to identify the patterns of hand/wrist motions from such abundant data \cite{scheme2011electromyogram}. It has produced promising results and also brought many challenges, including the real-time processing cost of large amount of data and the extra noises introduced by additional sensors. On the other hand, a low-dimentional PC decoder with only 4 EMG signals has been shown to accurately predict wrist/hand movement in \cite{Crouch2016}. The selection of these EMG signals is based on their prior knowledge of the important muscles in the musculoskeletal structure of able-bodied (AB) subjects. However, it would be much more challenging to select relevant EMG sensors for TRA subjects since the musculoskeletal structure might be changed due to the loss of many muscles. 

A functional estimation procedure called Sequential Adaptive Functional Estimation (SAFE) has been proposed recently in \cite{Stallrich2020} to select the EMG sensors and decode the EMG signals into wrist/hand movement for TRA subjects. The statistical model proposed in SAFE uses multiple functional covariates representing recent past behavior EMG signals, whose effects can vary with the recent position (flexion or extension) of fingers and wrist, to predict the velocity or acceleration of a given movement. An adaptive group Least Absolute Shrinkage and Selection Operator (group LASSO) penalty \cite{tibshirani1996regression,yuan2006model} is employed to select the EMG sensors, then a smooth ridge regularization consisting of only the most relevant EMG sensors is used to replace the group LASSO penalty in the decoding process. It could reduce the estimation bias caused by the group LASSO penalty \cite{Stallrich2020}. It selects very few relevant EMG sensors and uses them to decode the finger/wrist movement information without sacrificing prediction accuracy. The adaptive procedure promotes the accuracy performance in sensor selection and movement estimation. However, such adaptive approach involves heavy cross validations. SAFE method uses the single-scale spline basis to estimate the predictors, which generates dense coefficient matrices. Dense matrices could dramatically drag down the speed of the computation. As experiments in \cite{Stallrich2020} illustrate, SAFE costs tens of hours in a personal computer to get final results on every single data set. This is a computational bottleneck for practical applications of the SAFE method.

To overcome this computational burden of the single-scale spline basis SAFE method, we propose in this paper a fast \textit{multiscale} numerical scheme to solve the functional linear model. It has been understood \cite{beylkin1991fast} that representation of integral operators could be numerically sparse under the so-called \textit{multiscale method}. Different from single-scale basis, multiscale basis extracts information of the integral operator from different scale, which naturally leads us to coefficient matrices concentrating at $0$. After a proper truncation, the resulting model remains precise while having sparse representation. This sparsity could help accelerating the computational process. Also by the truncated multiscale representation, we have noise partly filtered out from input data, which also boosters the multiscale method against noise.

Specially, following the idea of multiscale methods for integral equations \cite{micchelli1994using,micchelli1997reconstruction,chen2002fast}, here we apply the multiscale piecewise polynomial basis in discretization of the integral operators in the FLM model, to obtain the proposed Multiscale SAFE (MSAFE) method. Such a multiscale basis has vanishing moments and shrinking supports, which results in a coefficient matrix with decaying entries. This property enables us to approximate the coefficient matrices by a sparse one and therefore accelerates the calculation. After a proper truncation of the multiscale coefficient matrices, the computational costs can be reduced significantly comparing to the single-scale spline basis SAFE method. Also, the multiscale method is more robust against noise due to the truncation. As experiments on real data reveal, MSAFE saves 85\%$\sim$90\% of computational time as SAFE method, while providing even better sensor selection and prediction errors. In simulation study, we test SAFE and MSAFE methods with correlated data, where MSAFE outperforms single-scale method. Such idea could be extensively applied in other integral models with ease.

For a fair comparison, in this paper we make several compromises on the proposed multiscale method. On one hand, to solve the group LASSO model, SAFE applies the popular method introduced in \cite{yang2015fast} by solving a smoothing model instead. Such method gives fast solution estimations of group LASSO models with set of parameters, but could have significant deviation from the real global solutions. Actually the group LASSO model is a special case of non-smooth convex optimization problems, which were extensively studied in past decades, and many algorithms were developed to solve such kind of problems with solid convergence analysis. For this type of non-smooth convex optimization problems, one may refer to fixed-point proximity algorithms \cite{Micchelli2013proximity,li2015multi}, primal-dual algorithms \cite{esser2010general,Chambolle2011first} and alternating direction methods of multipliers \cite{esser2009applications,boyd2011distributed}. On the other hand, the fast comprehensive method to solve integral equations is considered to be the multiscale collocation method \cite{chen1999construction,chen2015multiscale}, which, in addition to multiscale basis, utilizes the multiscale collocation functionals in model discretization. Both multiscale basis and multiscale collocation functionals contribute together to an even more sparse coefficient matrix. These new concepts however will cause considerable changes to the SAFE method, and make it difficult to distinguish contributions to the final improvement among all modifications. To demonstrate the advantage of the use of the multiscale basis alone, in this paper we keep most of the original SAFE method intact, and only apply the multiscale basis to the corresponding part of the SAFE method.

%\GHS{Jin, could you work on this : Give a brief review of multiscale collocation method with appropriate references. Roughly explain why it is fast. Give a brief summary of the performance comparison of our method with SAFE.} 

To summarize, this paper contributes to SAFE method in the following aspects. We substitute the single-scale basis of the SAFE method with multiscale piecewise polynomial basis in the FLM model, which systematically generates sparse coefficient matrices after proper truncation. Also, since the widely-used \code{R} \cite{R} package \code{gglasso} \cite{gglasso} for the group LASSO model is not accepting sparse matrices, we modify corresponding functions in the package for maximum acceleration and fair comparison. These together are combined to be the proposed Multiscale SAFE (MSAFE) method and lead to the final improvement to the SAFE method.

The rest of this paper is organized as follows. We describe in Section 2 the functional linear model and the original SAFE method for EMG-based hand-movement predictors. In Section 3, we introduce the fast multiscale method, MSAFE, to solve integral models for sensor selection and movement estimation in SAFE method. Section 4 contains the application of the proposed method on the real data sets studied in \cite{Stallrich2020}. To address the robustness of the proposed method against correlation, Section 5 shows the results on simulated data with correlated noises. Conclusions of this study are drawn in Section 6. For simplicity of the presentation, we provide the mathematical derivations and technical proofs of the multiscale analysis in Appendix.

\section{EMG-Based Predictor for Hand Movements}\label{sec:problem_description}

We briefly describe in this section the operating principles of the prosthesis controller with EMG signals, and review the cutting-edge SAFE method which models the wrist/finger movement with historical FLM involving corresponding EMG signals. 

% Cutting-edge technology for transradial amputees (TRA) to control the robotic prosthesis utilizes the biosignals related to the movements \cite{resnik2011development,resnik2011using}. Among available biosingals, electromygram (EMG) signals according to muscle contractions have dominance position, due to the actual biomechanical process for an abled-bodied (AB) subject illustrated in \Cref{fig:EMG}.

For an AB subject, the intended hand movement originates from active potentials in motor neurons in cerebral cortex. Those neural signals conduct along motor neural pathway and infuse into corresponding muscle cells, which then cause muscle contractions to accomplish the intended movement. The terminal action potential measured from the muscle fibers is defined as motor unit action potential (MUAP), which is positively correlated to the magnitude and duration of muscle contractions. The EMG sensors placed on subject's forearm could measure the sum of MUAPs across muscles, and therefore serve as effective indicators for predicting hand movements, for both AB subjects and TRAs. %\GHS{remark on the longitudinal dependence. Lucia, could you please add a few sentences here? Essentially, we need to say the movement might depend not only the current muscle status but also previous status.}
The predicted hand movements are fingers/wrist flexions and extensions in different arm postures. Due to passive forces  triggered by muscle relaxation, \cite{Stallrich2020} showed graphically  that finger movement can happen when no active EMG signal is recorded. \cite{Stallrich2020} also noticed that there are significant correlations among all EMG signals across the 30-seconds time window the data was collected. It is possible to have multiple active EMG signals when performing one instance of finger flexion and extension. Based on these findings, SAFE predicts finger/wrist movement based on the recent past EMG signals and current finger/wrist position. 

% \begin{figure}[thbp]
%     \centering
%     \caption{The biomechanical process for an AB subject and TRA.}\label{fig:EMG}
%     \includegraphics[width=0.8\linewidth]{}
% \end{figure}

We point out that there are two important issues in designing the prosthesis controller with EMG signals: the selection of most relevant EMG sensors and the decoding of EMG signals to wrist/finger movement. There are 20 muscles of the forearm controlling various movements of wrist and fingers of the hand \cite{mitchell2019anatomy}. For a more accurate and interpretable prosthesis controller with a rapid real-time response, we have to select the most important EMG signals and have an accurate and efficient algorithm to decode them into the wrist/finger movement. Both problems would rely on an accurate quantitative model of the relation between EMG signals and wrist/finger movement. 

We will employ the flexible statistical model introduced in SAFE. In particular, the functional linear model (FLM) \cite{cardot1999functional,Cardot2003,Ferraty2012,Goldsmith2011,Mclean2014,Wu2010}, specially historical FLM \cite{malfait2003historical} would be used to describe the velocity/acceleration of wrist/finger based on recent past EMG signals. We remark that historical FLM has received many successful applications in functional regression problems. %\GHS{Lucia, does it make sense? We might need to add a few references here.} 

In \cite{malfait2003historical} the authors use the finite element method to estimate the historical functional model. This model considers a sample of curves $y_i(t)$ that can be predicted by covariate curves $x_i(s)$ with $s\in[t-\delta, t]$ and $\delta>0$ was estimated from the data. A speech production experiment is used to show the performance of the historical functional model. The data on different groups of muscles involved in the anatomy and physiology of speech was collected by EMG sensors. The curves $y_i(t)$ represent the accelerations of the center of the lower lip and the covariates $x_i(t)$ represent the EMG signal associated with the depressor labii inferior muscle. In \cite{Harezlak2007} the authors proposed a new method of estimating the historical functional model of \cite{malfait2003historical}. Their procedure combines regularization with $L^1$- and $L^2$-norm penalties of the coefficients of the neighboring basis functions. The model is then fit to a data set collected on a sample of boilermaker workers  that studies the relationship between occupational particulate matter and heart rate variability. 

In \cite{Kim2011} the recent functional linear model for sparse longitudinal data is studied. The longitudinal predictor defined only in a sliding window into the recent past $[t-\delta_1, t-\delta_2]$ for $0< \delta_2 < \delta_1 < T$ has an effect on the longitudinal response. This model is then applied to a primary biliary liver cirrhosis longitudinal data where the relationship between serum albumin concentration and prothrombin time is investigated. Historical functional models with a large number of functional and scalar covariates as in \cite{Brockhaus2017} and models with factor-specific random historical effects as in \cite{Rugamer2018} are estimated by a component-wise gradient boosting algorithm which is suitable for complex models. A fully Bayesian estimation approach based on the discrete wavelet-packet transformation was employed in \cite{Meyer2021}. 

We next briefly introduce the statistical model applied in SAFE method. Suppose we have $K$ measured and processed EMG signals and $N$ instances of measurement at different time $\{t_i\}_{i=1}^N$. For $1\le i\le N$ and $1\le k\le K$, we use $X_{ik}$ to denote the $k$-th historical EMG signal at $i$-th instance. We would like to use them to predict $y_i\DEF y(t_i)$, the response (velocity or acceleration) of the movement at time $t_i$ along with position $z_i$. The historical FLM employed in SAFE is
\begin{equation}\label{mdl:FLM_ORI}
    \bE\zkh{y_i\middle| X_{i1},X_{i2},\ldots,X_{iK},z_i}=\sum_{k=1}^K\int_{\mathcal{T}} X_{ik}\kh{\tau}\gamma_k\kh{\tau,z_i}d\tau,\qquad 1\leq i\leq N,
\end{equation}
where $\mathcal{T}\DEF[-\delta,0]$ with $\delta>0$ defines the length of historical time window, $X_{ik}(\tau)\DEF X_k(t_i+\tau)$ is the historical EMG signal at time $t_i$ and $\gamma_k$ are the unknown bivariate kernels defined on $\mathcal{T}\times\mathcal{Z}$ with $\gamma_k(\cdot,z)\in L^2(\mathcal{T})$ for any $z\in\mathcal{Z}$ and $1\leq k\leq K$. Here $\mathcal{Z}\subset\bR$ is the range of position.

We will rely on the above model to select the most important EMG sensors and predict the velocity/acceleration of movement. To select EMG sensors, a group LASSO regularization model \cite{yuan2006model} could be applied in \cref{mdl:FLM_ORI}, resulting in the following model
\begin{equation}\label{mdl:FLM_GGL}
    \begin{aligned}
        \min_{\gamma_k\in H^2}&\left\{\sum_{i=1}^N\normm{y_i-\sum_{k=1}^K \int_{\mathcal{T}} X_{ik}\kh{\tau}\gamma_k\kh{\tau,z_i}d\tau}^2\right.\\
        &\qquad\qquad\qquad\left.+\lambda\sum_{k=1}^K\sqrt{f_k\normm{\gamma_k}^2+\phi_t g_k\normm{\gamma''_{k,t}}^2+\phi_z h_k\normm{\gamma''_{k,z}}^2}\right\},
    \end{aligned}
\end{equation}
where $H^2\coloneqq W^{2,2}(\cT\times\cZ)$ is the Sobolev space of all functions possessing $L^2$ derivatives at least order 2 on $\cT\times\cZ$, $\|f\|^2\coloneqq\iint_{\cT\times\cZ} f^2(t,z)dtdz$,  $f''_t\coloneqq \partial^2f/\partial t^2$, and $f''_z\coloneqq \partial^2f/\partial z^2$ for any $f\in H^2$. The non-negative constants $f_k,g_k,h_k$ for $1\leq k\leq K$ control the penalty weights, and non-negative constants $\phi_t,\phi_z\geq0$ serve as global controllers of penalty weights on the norms of derivatives $\{\|\gamma''_{k,t}\|^2\}_{k=1}^K$ and $\{\|\gamma''_{k,z}\|^2\}_{k=1}^K$, respectively. We remark that the magnitude of estimated $\gamma_k$'s in \cref{mdl:FLM_GGL} measures the importance of the corresponding EMG signal. If the kernel $\gamma_k$ is estimated to be 0, then the corresponding $k$-th EMG signal will be considered insignificant to the movement of interest.

This idea is applied to select the most important EMG sensors. In particular, we will follow the multistage procedure proposed in SAFE method. Let $\cK^0\DEF\{1,2,\ldots, K\}$ and we first solve \cref{mdl:FLM_GGL} with $f_k=g_k=h_k=1$ for $1\le k\le K$ to get estimators $\{\hat{\gamma}^1_k\}_{k\in\cK^0}$. We then define the active variable set $\cK^1\DEF\{k\in \cK^0: \hat{\gamma}^1_k \neq 0\}$ and update the weights
\begin{align}\label{mdl:FLM_GGL_fgh}
    f_k^1 =\normm{\hat{\gamma}^1_k}^{-1}, \quad g_k^1=\normm{\kh{\hat{\gamma}^1_k}^{\prime\prime}_{t}}^{-1}, \quad h_k^1=\normm{\kh{\hat{\gamma}^1_k}^{\prime\prime}_{z}}^{-1}, \qquad \text{for all }k\in \cK^1.
\end{align}
Then we find the new estimators $\{\hat{\gamma}^2_k\}_{k\in\cK^1}$ by solving \cref{mdl:FLM_GGL}  with the active variable set $\cK^1$ and the updated weights:
\begin{align}\label{mdl:FLM_GGL_2}
      \begin{aligned}
        \min_{\gamma_k\in H^2,k\in\cK^1}&\left\{\sum_{i=1}^N\normm{y_i-\sum_{k\in \cK^1} \int_{\mathcal{T}} X_{ik}\kh{\tau}\gamma_k\kh{\tau,z_i}d\tau}^2\right.\\
        &\qquad\qquad\qquad\left.+\lambda\sum_{k\in \cK^1}\sqrt{f_k^1\normm{\gamma_k}^2+\phi_t g_k^1\normm{\gamma''_{k,t}}^2+\phi_z h_k^1\normm{\gamma''_{k,z}}^2}\right\}.
    \end{aligned}  
\end{align}
The new active variable set could be defined in a similar way: $\cK^2\DEF\{k\in \cK^1: \hat{\gamma}^2_k \neq 0\}$. Suppose $R$ stages are repeated till certain stop criteria is met, then we arrive at the selected set of EMG sensors $\cK^R$. After the most relevant sensors $\cK^R$ are determined, SAFE method suggests using the smooth ridge regression model to get the final estimation of the kernels. That is, we will find the estimator of $\gamma_k$ for each $k\in \cK^R$ by solving
\begin{align}\label{mdl:stage2}
      \begin{aligned}
        \min_{\gamma_k\in H^2,k\in\cK^R}&\left\{\sum_{i=1}^N\normm{y_i-\sum_{k\in \cK^R} \int_{\mathcal{T}} X_{ik}\kh{\tau}\gamma_k\kh{\tau,z_i}d\tau}^2\right.\\
        &\qquad\qquad\qquad\left.+\sum_{k\in \cK^R}\biggl(\phi\normm{\gamma_k}^2+\phi_t \normm{\gamma''_{k,t}}^2+\phi_z \normm{\gamma''_{k,z}}^2 \biggr)\right\}.
    \end{aligned}  
\end{align}
The regularization parameters $\phi,\phi_t,\phi_z\geq0$ are usually chosen by cross-validation method over certain candidacies. We point out that since the most relevant sensors have been chosen via \cref{mdl:FLM_GGL,mdl:FLM_GGL_2} in the first stage, we should not apply any sparse regularization in the final stage of functional estimation. The smooth regularization in \cref{mdl:stage2} could reduce the estimation bias caused by sparse penalty \cite{Leeb2015, Zhao2020}.

It is direct to observe that the major computation cost of the method comes from solving \cref{mdl:FLM_GGL}. We need to sequentially solve the same model with different parameters $f_k, g_k, h_k$ at each stage of the EMG signal selection. Moreover, cross validation is applied to choose the optimal regularization parameters $\lambda,\phi_t,\phi_z$ among candidates. These above mean that we need to solve \cref{mdl:FLM_GGL} with different constants repeatedly for a large number of times. It is necessary to develop a fast and efficient algorithm to numerically solve \cref{mdl:FLM_GGL}.

\section{Multiscale SAFE Method}
In this section we present the proposed fast multiscale SAFE (MSAFE) method to solve the models \cref{mdl:FLM_GGL,mdl:stage2}. Specially, we will employ the multiscale basis functions introduced in \cite{micchelli1994using,chen1999construction,chen2015multiscale}, which is widely used in developing fast algorithms for solving integral equations efficiently.

We remark that \cref{mdl:FLM_GGL,mdl:stage2} are minimization problems over infinite-dimensional spaces of functions, which requires discretization to solve them numerically. The original SAFE method uses tensor products of orthogonal cubic spline bases to represent the unknown kernels $\gamma_k$'s. Such full-supported single-scale basis suffers from high computational cost in two aspects. On one hand, such full-supported basis requires integral in the full domain in every calculation, which causes heavy computation when assembling coefficient matrices. On the other hand, each of the basis functions only extracts information from different part of the kernel function under the same scale. This results in coefficients of flatten distribution and therefore leads to dense matrices, which slow down the computation at later-stages.

To overcome disadvantages of the single-scale basis, here we employ a multiscale piecewise polynomial basis to discretize the integral operators in \cref{mdl:FLM_GGL,mdl:stage2}. As shown in the following sections, the multiscale basis would systematically generate sparse coefficient matrices, which therefore significantly improve the computational speed.

\subsection{Multiscale Piecewise Polynomial Basis}

We in this section briefly introduce the multiscale piecewise polynomial basis and its properties, then show in a simple numerical case that such basis could systematically yield sparse coefficient matrices. For simplicity of presentation, we leave the technical constructions and mathematical proofs of the multiscale piecewise polynomial basis in \ref{app}.

To overcome the disadvantages of full-supported single-scale basis, the multiscale basis improves in both the speed of generating coefficient matrices and the sparsity of the resulting matrices. Multiscale piecewise polynomial basis functions, analogous to wavelets, have vanishing moment and are divided into different levels. Such basis functions are orthogonal between different levels, and have exponentially shrinking support as level increases. The shrinking support of the basis functions accelerates calculations of high level coefficients and, together with the vanishing moment and orthogonality, enables the multiscale basis to capture information of the kernel in different levels. These together result in a sparse coefficient matrix. % whenever the problem is of enough regularity.

Specifically, we let $\bM_n^p$ be the space of all piecewise polynomials of degree no more than $p$ on $[0,1]$, with nodes at $\{i/2^n\}_{i=0}^{2^n}$. Such a multiscale piecewise polynomial space approaches space $H^2$ as level $n$ increases, and has the property that $\bM_n^p\subset\bM_{n+1}^p$, which could lead to a multilevel structured basis, as briefly described below. At the first level $W_0$, we choose $p+1$ basis polynomials of $\bM_0^p$. At next level $W_1$, we choose $p+2$ piecewise polynomials on $\bM_1^p$ that are perpendicular to $\bM_0^p$. For higher level $l\geq 2$, $W_l$ could be generated by recursively scaling and shifting functions of $W_{l-1}$:
\[
    W_l = \{\cT_iw:w\in W_{l-1},i\in\{0,1\}\},
\]
where
\[
    \kh{\cT_0f}\kh{x}\DEF f\kh{2x},\qquad \kh{\cT_1f}\kh{x}\DEF f\kh{2x-1}.
\]
It is direct to observe that every function in $W_l$ is perpendicular to $\bM_0^p$ and has a support on an interval with length no more than $2^{-l+1}$. 

We point out that the introduced multiscale piecewise polynomial basis could be used to design fast algorithms for solving integral equations, such as the Fredholm integral equation of the first kind
\begin{equation}\label{mdl:Fredholm}
    y\kh{t}=\int_0^1 K\kh{t,\tau}\gamma\kh{\tau}d\tau,
\end{equation}
when the unknown function $\gamma$ is represented by the basis functions in $W_l$. We provide a brief explanation below:
\begin{enumerate}[label=(\Roman*)]
    \item   The basis functions in $W_l$ have small supports and we only need to calculate integrals on small intervals rather than $[0,1]$ when creating coefficient matrices of \cref{mdl:Fredholm}.

    \item\label{itm:1}   The basis $W_l$ for $l\geq 1$ are all perpendicular to $\bM_0^p$, which means they have vanishing moment $p+1$. The magnitude of coefficient matrix entries will decay as $l$ increases. In particular, if kernel $K$ in \cref{mdl:Fredholm} is smooth enough, then entries of the corresponding coefficient matrix would have an exponential decay. Therefore it could be approximated by a sparse matrix. \Cref{pps:ele_est} in \ref{app} provides a theoretical justification of this property.
\end{enumerate}

\begin{figure}[!t]
    \centering
    \caption{Frequency histograms of coefficient matrix of integral equation \cref{mdl:Fredholm} with spline basis and multiscale basis, on FC1 data set from \cite{Stallrich2020}. Height of each bar indicates the frequency of values failing into the corresponding $x$-interval. The multiscale coefficients highly concentrate around 0, while single-scale spline coefficients do not.}\label{fig:hist_1}
    \hfill\subfigure[Frequency histogram of spline coefficients.]{
        \includegraphics[width=0.45\linewidth]{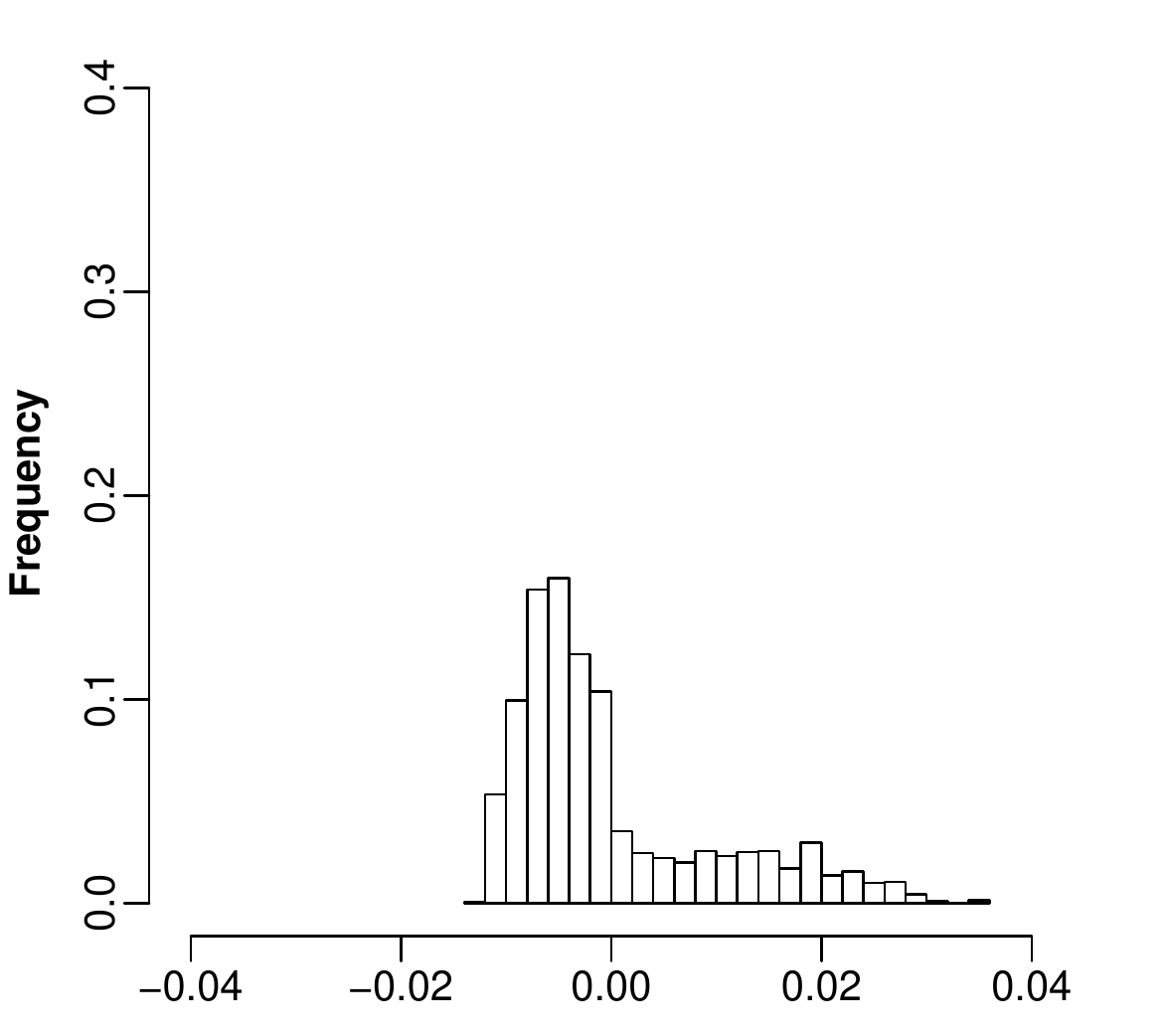}
    }\hfill
    \subfigure[Frequency histogram of multiscale coefficients.]{
        \includegraphics[width=0.45\linewidth]{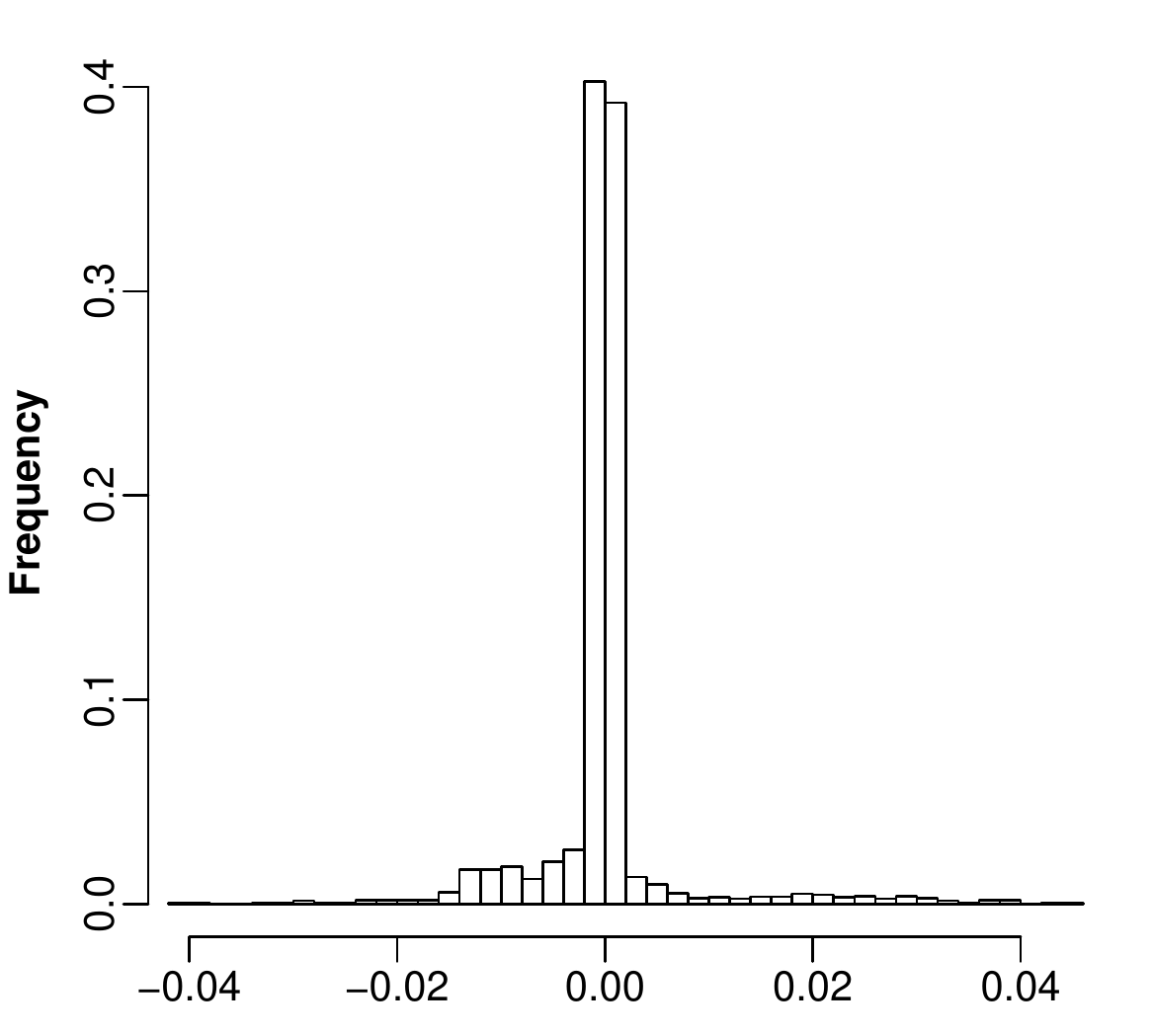}
    }\hfill
\end{figure}

Here we verify those properties with a numerical example. Suppose we would like to represent the unknown function $\gamma$ by $n$ level multiscale basis $\{w_j\}_{j=1}^{2^n(p+1)}\DEF\bigcup_{l=0}^nW_l$ and discretize \cref{mdl:Fredholm} at different sampling times $\{t_i\}_{i=1}^N$ as follows 
\begin{equation}\label{mdl:dc_multi_basis}
    y\kh{t_i}\approx \sum_{j=1}^{2^n\kh{p+1}}c_j\int_0^1 K\kh{t_i,\tau}w_j\kh{\tau}d\tau,\qquad 1\leq i\leq N.
\end{equation}
We point out that the coefficient matrix 
\begin{equation}\label{eq:A}
    A_n\DEF\left[\int_0^1 K\kh{t_i,\tau}w_j\kh{\tau}d\tau: 1\leq i\leq N, 1\leq j\leq 2^n\kh{p+1}\right]
\end{equation}
determines the computational cost of finding the coefficients $\vc\DEF(c_j)_j$. In other words, if the coefficient matrix $A_n$ is sparse, it would be much more efficient to find $\vc$. We consider $K(t,\tau)\DEF X_k(t-\delta \tau)$ with $1\leq k\leq16$ and $\{t_i\}_{i=1}^{198}$ in \cref{mdl:dc_multi_basis}, where $X_k$ for $1\leq k\leq16$ are the real EMG signals of data set FC1 from \cite{Stallrich2020}. We compare in \Cref{fig:hist_1,fig:hist_2} the sparsity of the coefficient matrix generated by the spline basis used in \cite{Stallrich2020} and the multiscale basis defined in \ref{app}. It is direct to observe from \Cref{fig:hist_1,fig:hist_2} that the coefficient matrices generated by the multiscale basis concentrates around 0, decays rapidly as the level increases and is much more sparse than those generated by the single-scale spline basis.

\begin{figure}[ht]
    \centering
    \caption{Magnitude plots of coefficient matrices \cref{mdl:Fredholm} for spline basis and multiscale basis. Columns correspond to the sampling time $\{t_i\}_{i=1}^{198}$, while rows mean different basis functions. Notice that matrices $\{X_k\}_{k=1}^{16}$ are combined in row for each basis respectively. There is a notable pattern of repeating in matrix of single-scale spline basis, which therefore has no sparse structure. For multiscale basis, most information of the kernel is extracted by the first few levels of basis functions, leading to a sparse coefficient matrix.}\label{fig:hist_2}
    \hfill\subfigure[Coefficient matrix from the spline basis.]{
        \includegraphics[width=0.45\linewidth]{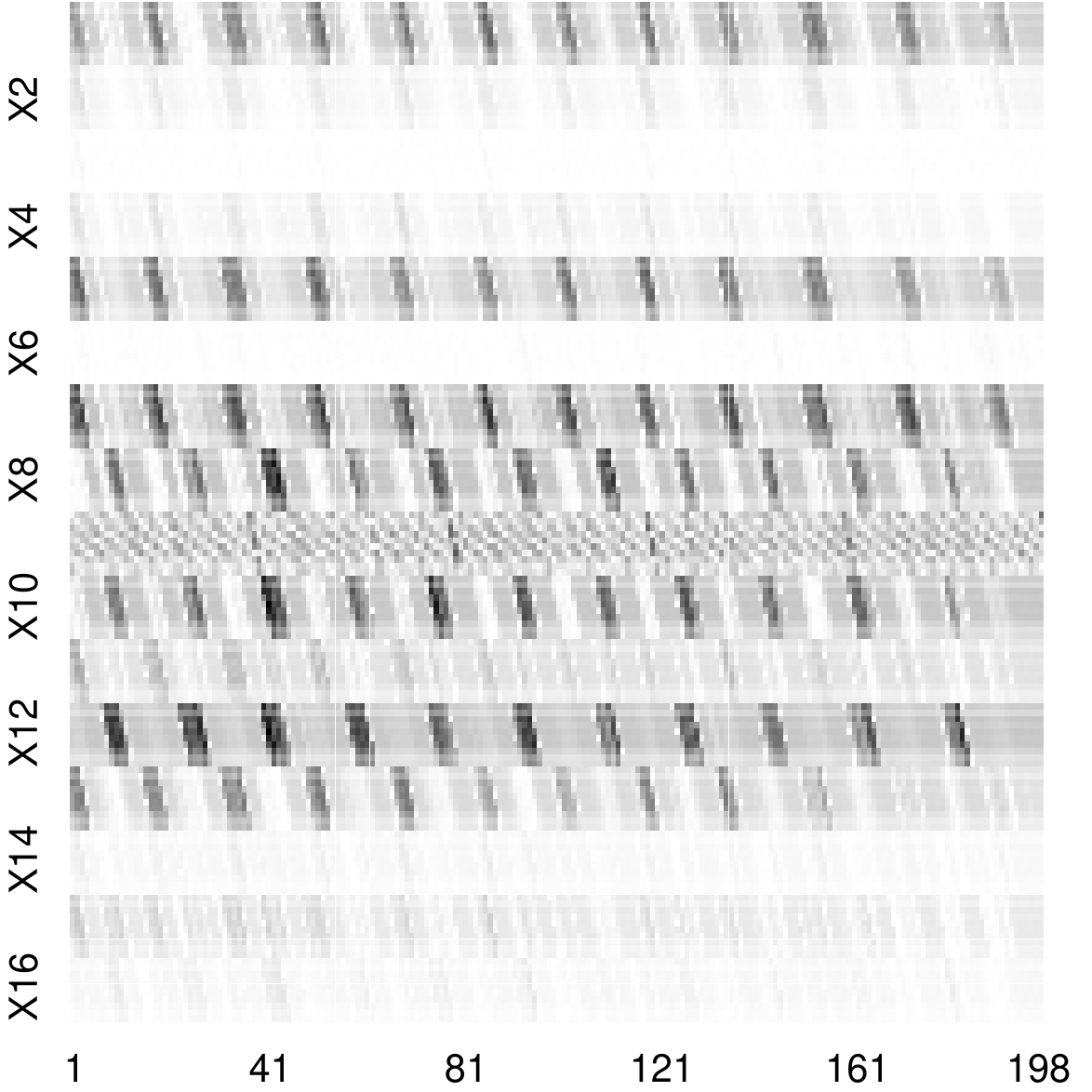}
    }\hfill
    \subfigure[Coefficient matrix from the multiscale basis.]{
        \includegraphics[width=0.45\linewidth]{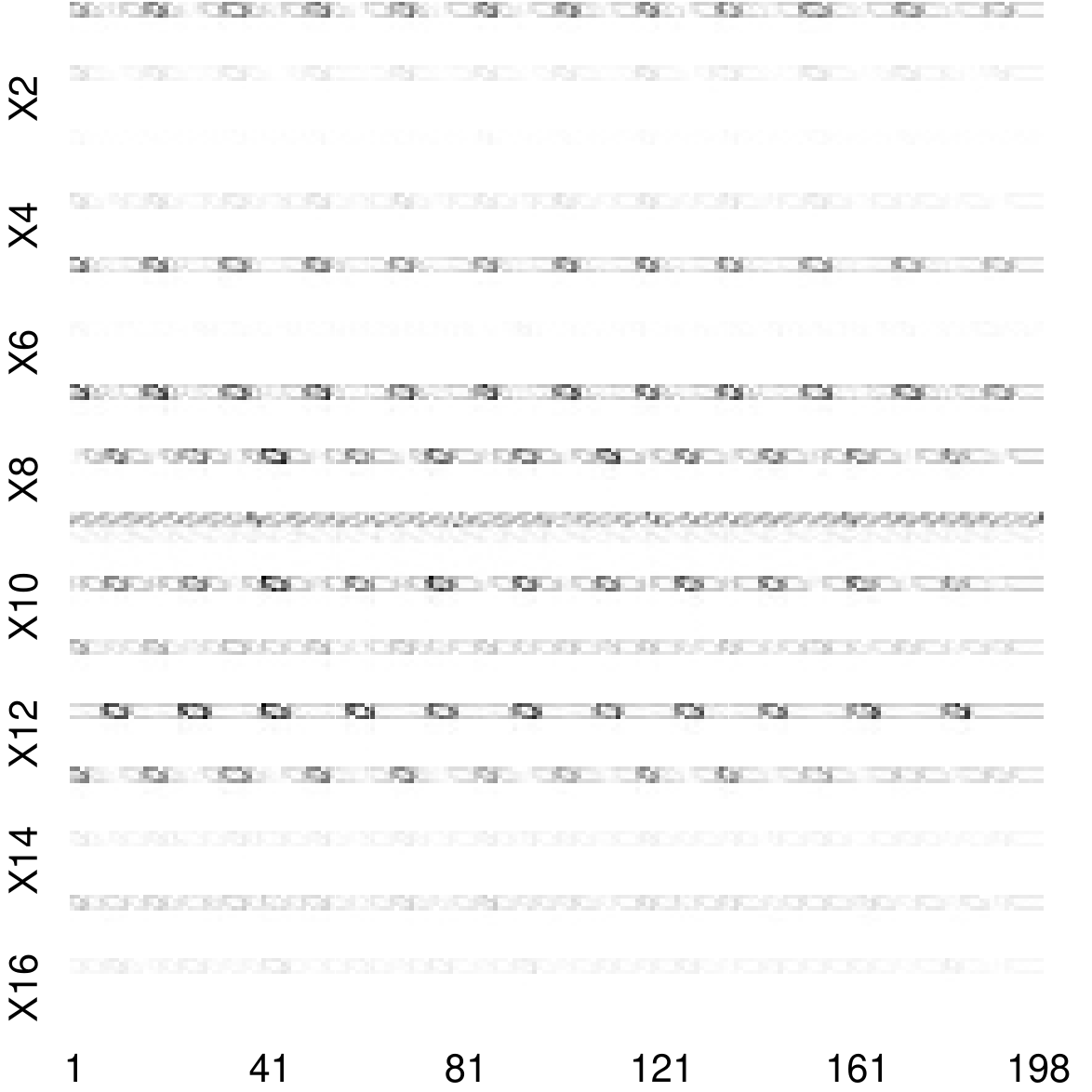}
    }\hfill
\end{figure}

Based on the explanation provided in (\ref{itm:1}) and \Cref{pps:ele_est} in \ref{app}, we introduce the following multiscale truncation strategy for \cref{mdl:FLM_GGL}. Theoretically the higher level $n$ we consider, the more precise solutions we will get, but higher level brings heavy computation. Fortunately, the following theorem implies that with the multiscale basis, relatively high levels will have insignificant effect to the coefficient matrix. This allows us to truncate the coefficient matrix, reducing computational costs while keeping coefficient matrices precise. The truncated $\tilde A^m_n$ of matrices $A_n$ for level $1\leq m\leq n$ is defined by
\begin{equation}\label{eq:trunc_strategy}
    \zkh{\tilde A^m_n}_{ij}\DEF\begin{cases}
        \zkh{A_n}_{ij},&1\leq j\leq 2^m\kh{p+1},\\
        0,&2^m\kh{p+1}<j\leq 2^n\kh{p+1}.
    \end{cases}
\end{equation}
We notice that $A_m$ is exactly the first $2^m(p+1)$ columns of $A_l$ for all $l\geq m$. To measure the difference caused by truncation, we denote by $\|A\|_2$ the $2$-norm for matrix $A$. Let $C^p[0,1]$ denote the space of all functions on $[0,1]$ possessing $p$-order continuous derivatives for $p\in\bN$.
\begin{thm}\label{thm}
    If $f\in C^p[0,1]$ and $m\in\bN$, then for $n> m$ there holds
    \[
        \normm{A_n-\tilde A^m_n}_2\leq c\cdot 2^{-mp},
    \]
    where $c>0$ is independent of $n$.
\end{thm}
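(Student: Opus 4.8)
The plan is to reduce the spectral-norm estimate to a per-entry decay bound and then reassemble it. First I would note that, by the truncation rule \cref{eq:trunc_strategy}, the difference $A_n-\tilde A^m_n$ is zero on its first $2^m(p+1)$ columns and equals $A_n$ elsewhere; since the basis $\bigcup_{l=0}^n W_l$ is ordered by level and $2^m(p+1)=\dim\bM_m^p$, the surviving columns are exactly those indexed by the high-level functions $w_j\in W_l$ with $m<l\leq n$. Thus it suffices to bound the block of $A_n$ built from levels above $m$, and everything hinges on showing that a single entry $\int_0^1 K(t_i,\tau)w_j(\tau)\,d\tau$ with $w_j\in W_l$ is exponentially small in $l$.

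For that entry estimate I would use the two structural facts about the basis recorded in \ref{app}: each $w_j\in W_l$ is supported on an interval $S_j$ of length at most $2^{-l+1}$, and for $l\geq1$ it is orthogonal to $\bM_0^p$, so $\int_0^1 q\,w_j=0$ for every polynomial $q$ of degree at most $p$ (note $l>m\geq0$ forces $l\geq1$, so this applies to all surviving columns). Writing $f\DEF K(t_i,\cdot)$, interpreted as lying in $C^p[0,1]$ uniformly in $i$, and taking $q$ to be the degree-$(p-1)$ Taylor polynomial of $f$ about the midpoint of $S_j$, the vanishing-moment property gives
\[
    \norm{\int_0^1 f(\tau)w_j(\tau)\,d\tau}=\norm{\int_{S_j}\kh{f(\tau)-q(\tau)}w_j(\tau)\,d\tau}\leq\frac{\normm{f^{(p)}}_\infty}{p!}\,\norm{S_j}^p\,\normm{w_j}_{L^1}.
\]
Since $\norm{S_j}\leq2^{-l+1}$ and $\normm{w_j}_{L^1}$ scales as a fixed power of $2^{-l}$ set by the normalization of the basis, this produces an entrywise bound $c_0\,2^{-l\rho}$ with $\rho=p+\tfrac12$ under the $L^2$ normalization (and $\rho=p+1$ under an $L^\infty$ normalization). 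This is precisely the exponential decay asserted in \Cref{pps:ele_est}, which I would cite to fix $c_0$ uniformly over the finitely many rows $1\leq i\leq N$.

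To pass from entries to the spectral norm I would invoke $\normm{B}_2\leq\normm{B}_F$ (or, as a cross-check, $\normm{B}_2\leq\sqrt{\normm{B}_1\normm{B}_\infty}$). With at most $\norm{W_l}\leq2^l(p+1)$ functions per level and $N$ fixed rows, $\normm{A_n-\tilde A^m_n}_F^2$ is a sum over $m<l\leq n$ of at most $N\cdot2^l(p+1)$ terms, each at most $c_0^2\,2^{-2l\rho}$, so it is controlled by the geometric series $\sum_{l>m}2^{l(1-2\rho)}$. Because $\rho>\tfrac12$ this series is dominated by its leading term $2^{-m(2\rho-1)}$, giving $\normm{A_n-\tilde A^m_n}_2\leq c\cdot2^{-m(\rho-1/2)}$; with the $L^2$ normalization $\rho-\tfrac12=p$ returns exactly $2^{-mp}$, and in any case $\rho-\tfrac12\geq p$ yields the stated bound. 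Since the series is summed only over $m<l\leq n$ but bounded by its infinite tail, the constant $c$ depends on $N$, $p$, $\normm{f^{(p)}}_\infty$ and the basis normalization but not on $n$, as required.

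I expect the main obstacle to lie in the per-level bookkeeping of the last step rather than in the entry estimate: the number of basis functions per level grows like $2^l$ while each entry decays like $2^{-l\rho}$, so one must check that the rate $\rho$ delivered by the Taylor/vanishing-moment argument beats this growth and lands exactly at the advertised exponent $p$ after the Frobenius summation halves the exponent. This is exactly where the $L^2$ versus $L^\infty$ normalization of $w_j$ enters through $\normm{w_j}_{L^1}$; since the clean rate $2^{-mp}$ emerges most naturally under the $L^2$ normalization, I would fix that convention at the start and track the constants through \Cref{pps:ele_est} accordingly.
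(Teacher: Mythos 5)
Your proposal is correct and follows essentially the same route as the paper: the paper likewise combines the entrywise decay of \Cref{pps:ele_est} (vanishing moments plus shrinking supports) with a Frobenius-norm summation over the columns at levels $m<l\leq n$, a geometric series bounded by its infinite tail, and $\normm{A}_2\leq\normm{A}_F$. The only remark worth making is that the paper's basis is generated by pure dilation (hence sup-normalized), so the operative entry rate is your $\rho=p+1$ case; your argument covers this and, as in the paper's own computation, it actually yields the slightly stronger bound $c\cdot 2^{-m(p+1/2)}$.
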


\Cref{thm} implies that with multiscale basis, the truncation would not cause great impact to the coefficient matrix, as long as $m$ is sufficiently large. With such level $m$, we truncate the coefficient matrix $A_n$ and work with the sparse $\tilde A_n^m$ in computation afterwards.

\subsection{Multiscale SAFE Method}

We thoroughly present the Multiscale SAFE method with multiscale basis to discretize \cref{mdl:FLM_GGL,mdl:stage2}. It will produce sparse matrices in the discretized problems and provide a much faster way to select the sensors and estimate the kernels than SAFE method.

In MSAFE, we represent the unknown function $\gamma_k$ by a basis in the Cartesian product space $\cS\DEF\bM_n^p\otimes\bS$, where $\bM_n^p$ has a multiscale basis $\{w_j: 1\leq j\leq (p+1)2^n\}$ and $\bS$ is a cubic spline space on $[0,1]$ with basis $\{s_l: 1\leq l\leq q\}$. That is, we consider
\begin{align*}
    \gamma_k\kh{t,z}\DEF\sum_{j=1}^{\kh{p+1}2^{n}}\sum_{l=1}^{q}b_{jlk}w_j\kh{t}s_l\kh{z},\qquad 1\leq k\leq K.
\end{align*}
For $1\le i\le N$ and $1\le k\le K$, define
\begin{equation}\label{eq:trunc}
    \zkh{A_{ik}}_{jl}\DEF s_l\kh{z_i}\int_0^1\cX_{ik}\kh{\tau}w_j\kh{\tau}d\tau, \quad 1\le j\le (p+1)2^m,~1\leq l\leq q,
\end{equation}
where $\cX_{ik}\DEF X_{ik}(t_i-\delta\cdot)$ and $m\leq n$ is the selected truncation level discussed in previous paragraph. That means the matrix $A_{ik}$ is already truncated according to strategy \cref{eq:trunc_strategy}. For $1\le k\le K$, we set $[\fA_k]_{i,\cdot}=\cV(A_{ik})^\mathsf{T}$ for $1\le i\le N$ and
\begin{align*}
    \beta_k\DEF\cV(\fB_k)\quad\mbox{with}\quad[\fB_k]_{jl}\DEF b_{jlk}\quad\mbox{for}\quad1\leq j\leq (p+1)2^n,~1\leq l\leq q,
\end{align*}
where $\cV$ denotes the operator that stacks the columns of a matrix into a column vector. We then discretize the FLM \cref{mdl:FLM_GGL} with multiscale basis as
\begin{align}\label{mdl:MC_GGL}\color{blue}
    \min_{\beta_k\in\bR^{2^nq\kh{p+1}}, k\in \cK}\dkh{\normm{\sum_{k=1}^K\fA_k\beta_k-y}^2+\lambda\sum_{k=1}^K\sqrt{\beta_k^\mathsf{T} \fG_{k}\beta_k}},
\end{align}
where $\fG_k\DEF f_k\cG+\phi_{t}g_k\cG_w+\phi_{z}h_k\cG_s$, $\cG\DEF G_s\otimes G_w$, $\cG_w\DEF G_s\otimes D_w$ and $\cG_s\DEF D_s\otimes G_w$ with `$\otimes$' denoting the Kronecker product of two matrices. The gram matrices $G_w,D_w\in\bR^{(p+1)2^{n}\times (p+1)2^{n}}$ and $G_s,D_s\in\bR^{q\times q}$ are given by
\begin{align*}
    \zkh{G_w}_{jj'}\DEF\kh{w_j,w_{j'}},\quad\zkh{D_w}_{jj'}\DEF\kh{w_j'',w_{j'}''},\quad\zkh{G_s}_{ll'}\DEF\kh{s_l,s_{l'}},\quad\zkh{D_s}_{ll'}\DEF\kh{s_l'',s_{l'}''}. 
\end{align*}
Since $\fG_k$'s are symmetric positive-definite, \cref{mdl:MC_GGL} could be easily reformulated as standard group LASSO model by variable substitution. We will then use the above \cref{mdl:MC_GGL} to select the most important sensors through the multistage approach described in \Cref{sec:problem_description}.

Once the most important sensors $\cK^R$ are selected after $R$ stages, we will use the following ridge regression model to estimate the corresponding coefficient of kernels $\beta_k$, for $k\in\cK^R$:
\begin{align}\label{mdl:MC_KE}\color{blue}
    \min_{\beta_k\in\bR^{2^nq\kh{p+1}},~k\in\cK^R}\dkh{\normm{\sum_{k\in\cK^R}\fA_k\beta_k-y}^2+\sum_{k\in\cK^R}\beta_k^\mathsf{T} \fG\beta_k},
\end{align}
where $\fG\DEF \phi\cG+\phi_t\cG_w+\phi_z\cG_s$ with parameters $\phi,\phi_t,\phi_z\geq0$. To solve \cref{mdl:MC_KE}, notice that the objective function is a quadratic function, which means that \cref{mdl:MC_KE} could be easily solved as a linear system.

\section{Numerical Experiments}

We will implement the proposed MSAFE method on the real data sets \cite{Stallrich2020} consisting of EMG and movement data from an AB subject's right limb. We will also compare its performance with the original SAFE method proposed in \cite{Stallrich2020}. Numerical results demonstrate that the proposed MSAFE method could achieve better or similar performance in sensor selection and prediction error with a significantly less computation time than SAFE. All experiments in this section are executed on \code{R} \cite{R}, within Windows 10 on an Intel Core i7 CPU @ 3.60 GHz and 16GB RAM. 

\subsection{Data Description and Preprocessing}
We first briefly describe the data sets. We consider two different patterns of movements, constant and random, and each of them contains 3 different movement data of finger and wrist, respectively. These give us 12 different data sets in total. There are 15 EMG sensors placed on the subject's limb. An external EMG signal unrelated to movement is also added to address the validity of sensor selection. Therefore in each data set, we will have 16 EMG signals $\{X_k(t):t\in T\}_{k=1}^{16}$ at $198$ different sampling time $T\coloneqq\{t_i\}_{i=1}^{198}$, where $X_9$ is the unrelated one. Moreover, the displacement of finger flexion/extension or wrist flexion/extension $\{z(t):t\in T\}$ at the corresponding times $T$ are also collected.

We next describe the preprocessing of the raw data of EMG signals and displacement in the convenience of numerical implementation. The displacement data $\{z_i\DEF z(t_i)\}_{i=1}^{198}$ and historical EMG signals $S_{ik}\DEF\{X_k(t):t\in[t_i-\delta,t_i]\cap T\}$ with window size $\delta=1/3$ for $1\leq k\leq16$ and $1\leq i\leq198$ are extracted from the raw data at 198 sampling time $\{t_i\}_{i=1}^{198}\subset T$. To get the corresponding movement velocity $\{z^\prime(t_i)\}_{i=1}^{198}$, six-order spline basis with third-order regularization are used to get a fit $\hat z(t)$ out of data $\{z(t):t\in T\}$, then $\{y_i\DEF\hat z'(t_i)\}_{i=1}^{198}$ could be computed explicitly. As for the integral \cref{mdl:FLM_ORI}, MSAFE uses continuous piecewise linear functions to interpolate the discretely sampled EMG data $\{X_k(t):t\in[t_i-\delta,t_i]\cap T\}$, and gets approximations of the continuous signals $\{\cX_{ik}(t):t\in[0,1]\}$ for $1\leq k\leq16$ and $1\leq i\leq198$ with explicit formulas. The integral \cref{mdl:FLM_ORI} and the matrices $\mathsf{A}_k$ for $1\leq k\leq 16$ in \cref{mdl:MC_GGL,mdl:MC_KE} can then be approximated.

\subsection{Experiment Setups}

In MSAFE, we will represent the kernels $\gamma_k(t,z)$ for $1\leq k\leq16$ in the space $\cS=\bM_2^3\otimes \bS_{10}$, where $\bM_2^3$ denotes the multiscale piecewise cubic polynomial space of level $2$ (see more details in \ref{app}) and $\bS_{10}$ is cubic spline space with dimension 10. We point out that the resulting space $\cS$ for MSAFE is of dimension $160$, which is larger than the one of SAFE method. Actually SAFE method adopts the space $\bS_{10}\otimes\bS_{10}$.

We start with the sensor selection. The tuning parameters $\lambda,\phi_t,\phi_z$ in the sensor selection \cref{mdl:MC_GGL} are set in such a way that $\log\lambda$ takes values from $-20$ to $0$ with step $0.25$ and $\log\phi_{t},\log\phi_{z}$ ranges from $-10$ to $0$ with step $2.5$. We will use 5-fold cross validation to select them in each of the following stages. The sequentially updated parameters $\{f_k,g_k,h_k\}_{k=1}^{16}$ in \cref{mdl:MC_GGL} are initialized to be $1$. We use the \code{R} package \code{gglasso} to solve \cref{mdl:MC_GGL} with the initial values at the first stage. We then get an active variable set $\cK^1$ and update the values of $\{f_k,g_k,h_k\}_{k=1}^{16}$ according to \cref{mdl:FLM_GGL_fgh}.  The second stage model \cref{mdl:MC_GGL} will be solved with the updated values of $\{f^1_k,g^1_k,h^1_k\}_{k=1}^{16}$ and the active variable set $\cK^1$. We repeat this process for 5 stages and obtain the final active variable set $\cK^5$ with both methods.

We next continue with the kernel estimation. Specifically, we will solve \cref{mdl:MC_KE} with the active variable set $\cK^5$.  %\GHS{how about the parameters?}
For SAFE method, $\phi=0$ as claimed in \cite{Stallrich2020} and for MSAFE, we set candidates $\phi$ such that $\log_{10}\phi\in\{-1,-2,-3,-4,-5\}$ for cross validation. In both methods, candidates of $\phi_t$, $\phi_z$ for cross validation and other setups are kept identical to the ones in sensor selection part, respectively.

We remark that the SAFE method in \cite{Stallrich2020} represents the kernels $\gamma_k$ in $\bS_{10}\otimes \bS_{10}$. We already showed in \Cref{fig:hist_2} that the coefficient matrices generated by the multiscale basis are much more sparse than those generated by the spline basis. In other words, the coefficient matrices in the proposed MSAFE method have a majority of entries close to 0. Furthermore, \Cref{fig:hist_2} indicates that the additional sparsity that is not modeled by truncation strategy \cref{eq:trunc_strategy}, and the spline term in \cref{eq:trunc} could contribute to some extra sparsity as well. In regard of these, we further truncate those small entries in $\mA_k$'s, only keep 10\% of entries to be nonzero in those coefficient matrices $\mA_k$'s.

During this study, the latest version of \code{gglasso} package does not take sparse matrices. To have a fair comparison with SAFE and fully demonstrate advantages of multiscale basis, we delve into the \code{Fortran} codes of the package, implement the sparse matrices multiplication (see e.g.\ \cite{saad1990sparskit}) and incorporate it into functions from the package \code{gglasso}.

\subsection{Experiment Results}

We will compare the performance of the proposed MSAFE method with original SAFE in the following aspects: the selected sensors, the prediction error with the estimated kernels, and the total computational time in sensor selection and kernel estimation. More precisely, the prediction error for a specified method and data set is defined by
\[
    {\rm MSE}=\frac{1}{5}\sum_{i=1}^5\sum_{j\in F_i}\frac{\kh{\hat y_j-y_j}^2}{|F_i|},
\]
where $F_i$ identifies the $i$-th test fold of the 5-fold cross validation, $\{y_j\}_{j\in F_i}$ are the actual responses for testing, and $\{\hat y_j\}_{j\in F_i}$ are the predicted values of the kernels estimated on corresponding training folds.

\begin{table}[!t]
    \centering
    \caption{Performance metrics for sensor selection at the final stage for constant (top three rows) and random (bottom three rows) finger and wrist movement patterns, and the CV MSE means and time costs for each of the data sets.}
    \label{tab:hand_mse}\small
    \begin{tabular}{c|c|c|S|
        S[table-format = 3.2,scientific-notation=false,round-mode=places,round-precision=2,table-parse-only=false]
        |c|S|
        S[table-format = 4.2,scientific-notation=false,round-mode=places,round-precision=2,table-parse-only=false]}
        \multirow{3}{*}{Data set} & \multirow{3}{*}{Method} & \multicolumn{3}{c|}{Finger Movement} & \multicolumn{3}{c}{Wrist Movement} \\\cline{3-8}
        && \mlc{Selected\\Sensor}& {CV MSE}& {Time (min)}& \mlc{Selected\\Sensor}& {CV MSE}& {Time (min)}\\\hline
        \multirow{2}{*}{Const \#1}
            & SAFE  & 7, 12 & 0.07793521 & 504.769  & 2, 11, 15 & 0.03930965 & 527.5565\\
            & MSAFE & 7, 12 & 0.07161073 & 47.15217 & 8, 15     & 0.04388828 & 68.94117 \\\hline
        \multirow{2}{*}{Const \#2}
            & SAFE  & 7, 12 & 0.09937427 & 525.9193 & 11, 15 & 0.04807177 & 734.1775\\
            & MSAFE & 7, 12 & 0.08298230 & 43.7185  & 11, 15 & 0.04856872 & 88.16233 \\\hline
        \multirow{2}{*}{Const \#3}
            & SAFE  & 7, 12 & 0.101227   & 669.3632 & 2, 9, 11, 15 & 0.05065761 & 886.0683\\
            & MSAFE & 7, 12 & 0.09279451 & 76.37567 & 11, 15       & 0.06015498 & 113.0318\\\hline
        \multirow{2}{*}{Rand \#1}
            & SAFE  & 7, 12 & 0.2020697  & 358.5368 & 8, 11, 15 & 0.1135957  & 699.4698\\
            & MSAFE & 7, 12 & 0.20479668 & 54.80233 & 8, 15     & 0.11809699 & 120.0632\\\hline
        \multirow{2}{*}{Rand \#2}
            & SAFE  & 5, 7, 12 & 0.1724732  & 593.2337 & 2, 8, 11, 15 & 0.1094774  & 1345.794\\
            & MSAFE & 5, 7, 12 & 0.14343446 & 103.1722 & 11, 15       & 0.12035256 & 201.8712\\\hline
        \multirow{2}{*}{Rand \#3}
            & SAFE  & 7, 12 & 0.1749214  & 725.365  & 11, 15 & 0.08222036 & 615.2748\\
            & MSAFE & 7, 12 & 0.17487612 & 122.4722 & 11, 15 & 0.08472709 & 65.80967
    \end{tabular}
\end{table}

Anatomy of hand movements provides us the most important sensors' placement related to the finger/wrist flexion/extension movements. Let $\cK\DEF\cK_F\cup\cK_E$ denote the index set of those most relevant sensors to the movement of interest, where $\cK_F$ and $\cK_E$ split $\cK$ into groups corresponding to flexion and extension. It was claimed in \cite{Stallrich2020} that for finger movement, $\cK_F=\{12\}$ and $\cK_E=\{5,7\}$; for wrist movement, $\cK_F=\{8,10,11,14\}$ and $\cK_E=\{2,7,13,15\}$. Notice that sensors $\{1,3,4,6,9,16\}$ are irrelevant to the movements of interest.

\Cref{tab:hand_mse} shows the performance of SAFE and MSAFE in sensor selection, cross validation mean square error, and the computational time for each data set. It is direct to observe that the proposed MSAFE method selects the same sensors as SAFE in most of the data sets. For finger movement data sets, both methods select exactly the same sensors. For wrist movement data sets, MSAFE method tends to select fewer but more important sensors. For example, for Constant \#3 data of wrist movement, sensor 9 is incorrectly chosen by SAFE and is successfully filtered by MSAFE.

Moreover, the cross validation mean square errors of MSAFE are almost the same as those of SAFE in every data set. However, as \Cref{fig:REAL} reveals, the overall computational time of MSAFE is remarkably less than that of SAFE in every data set; MSAFE costs only about 10\%$\sim$15\% time of that in the SAFE method. These results confirm that the multiscale polynomial basis used in MSAFE have brought a huge advantage in computational cost, while maintaining the prediction accuracy.

\begin{figure}[!t]
    \centering
    \caption{CV MSE means and standard deviations for the optimal tuning parameters of the last selection stage (top panel) and the time cost (bottom panel). MSAFE has comparable prediction error than SAFE, while time plots illustrate the advanced efficiency brought by multiscale basis.}\label{fig:REAL}
    \hfill\subfigure{
        \includegraphics[width=0.45\linewidth]{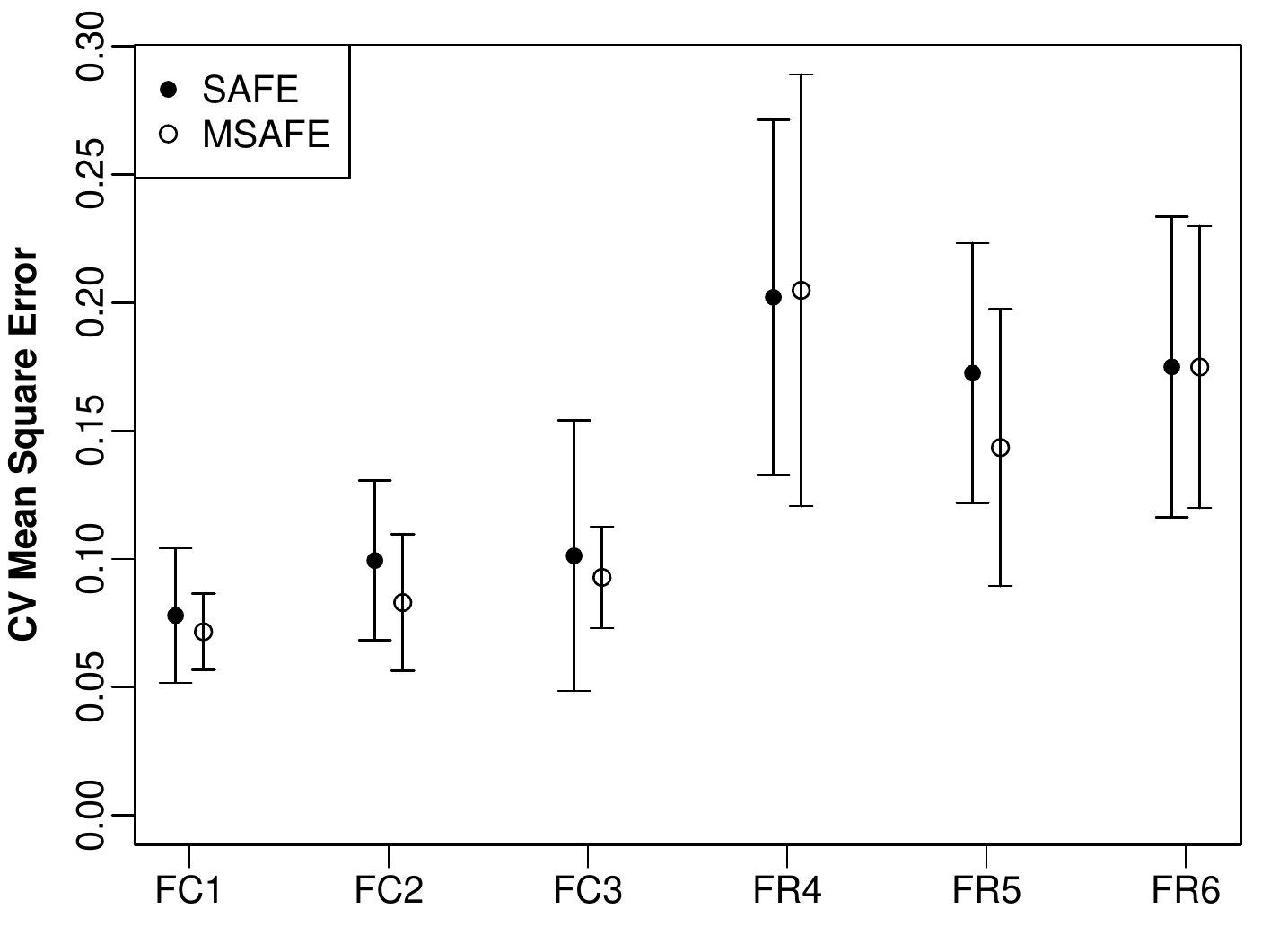}
    }\hfill
    \subfigure{
        \includegraphics[width=0.45\linewidth]{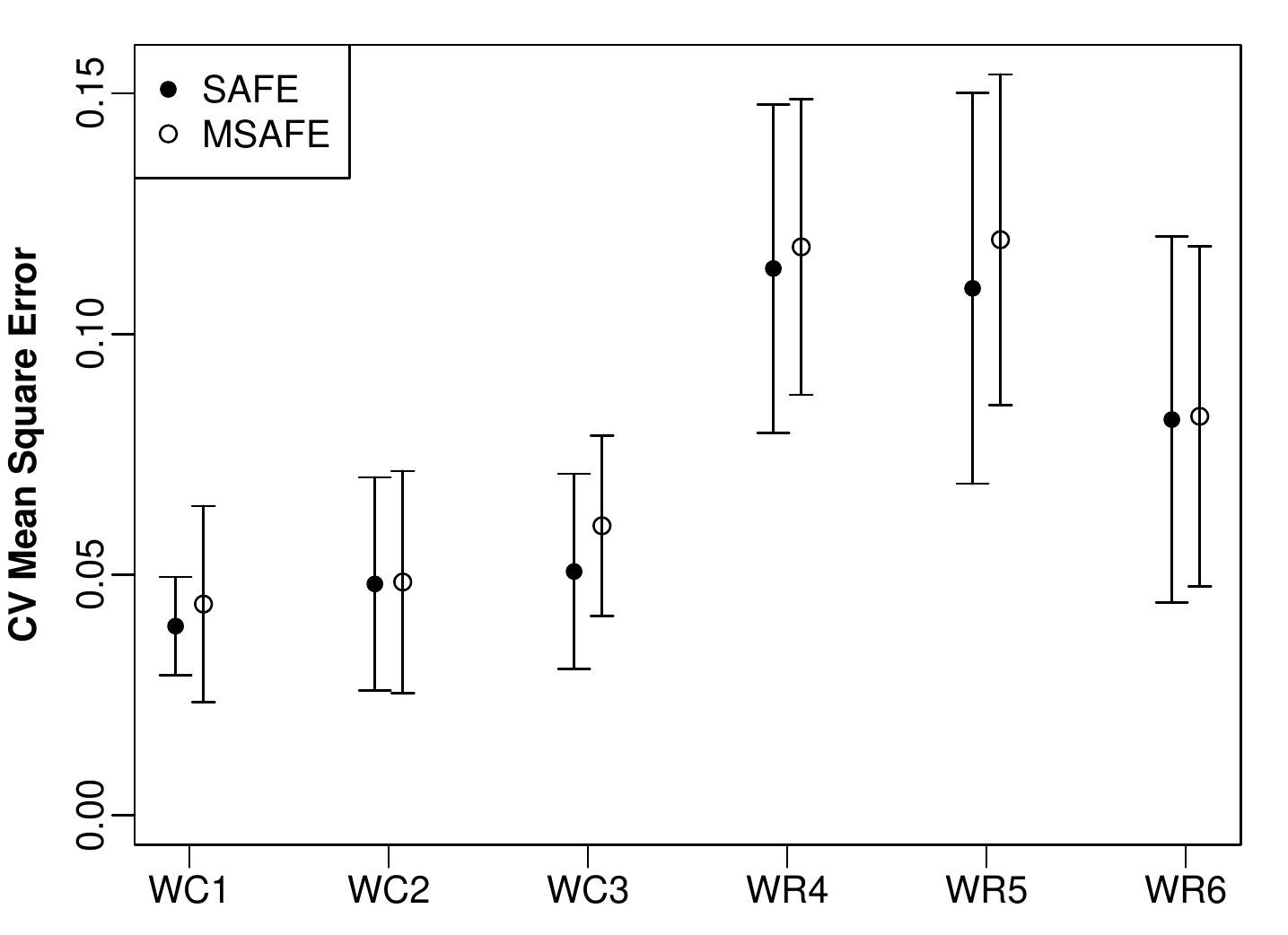}
    }\hfill\\
    \hfill\subfigure{
        \includegraphics[width=0.45\linewidth]{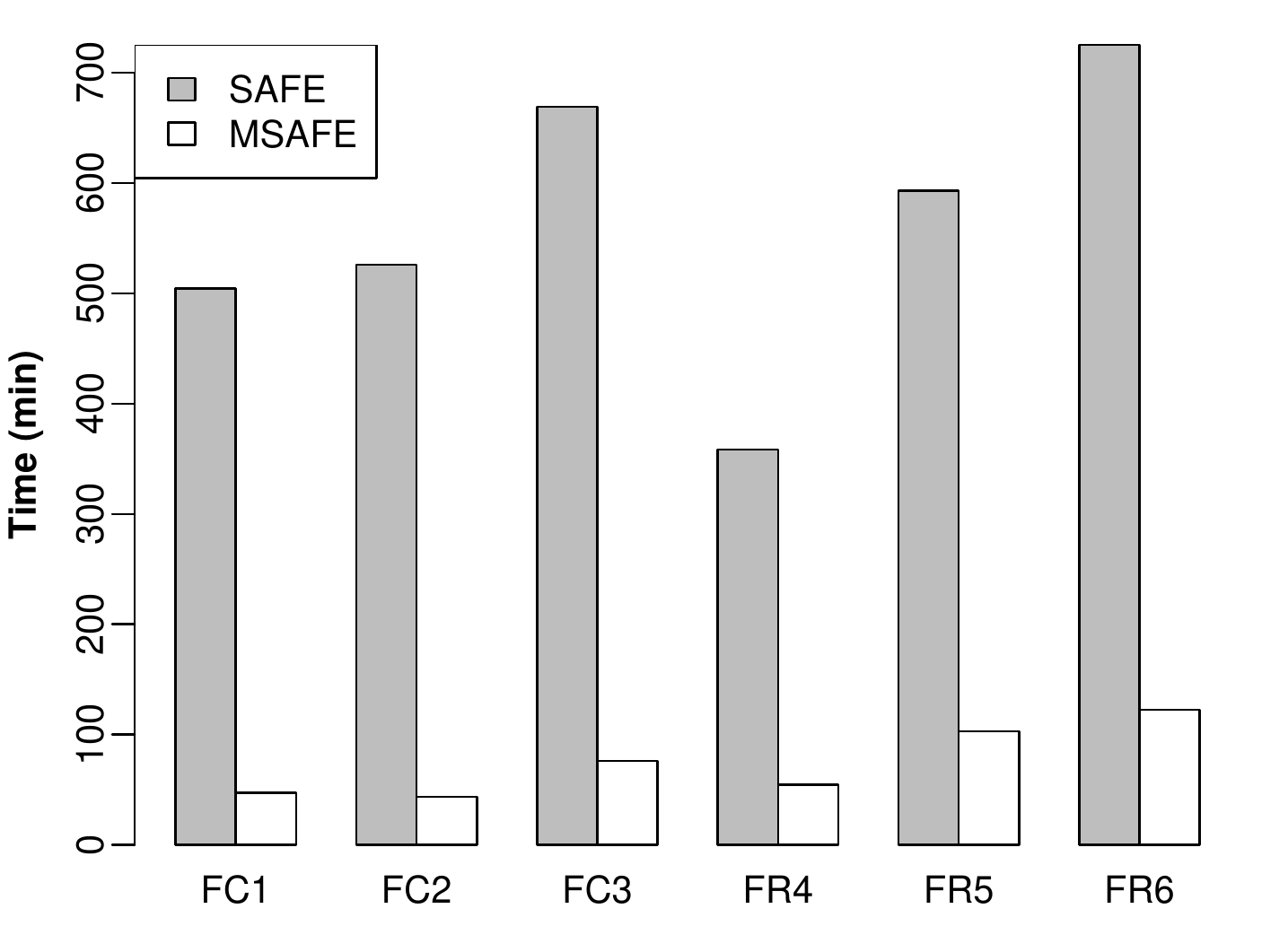}
    }\hfill
    \subfigure{
        \includegraphics[width=0.45\linewidth]{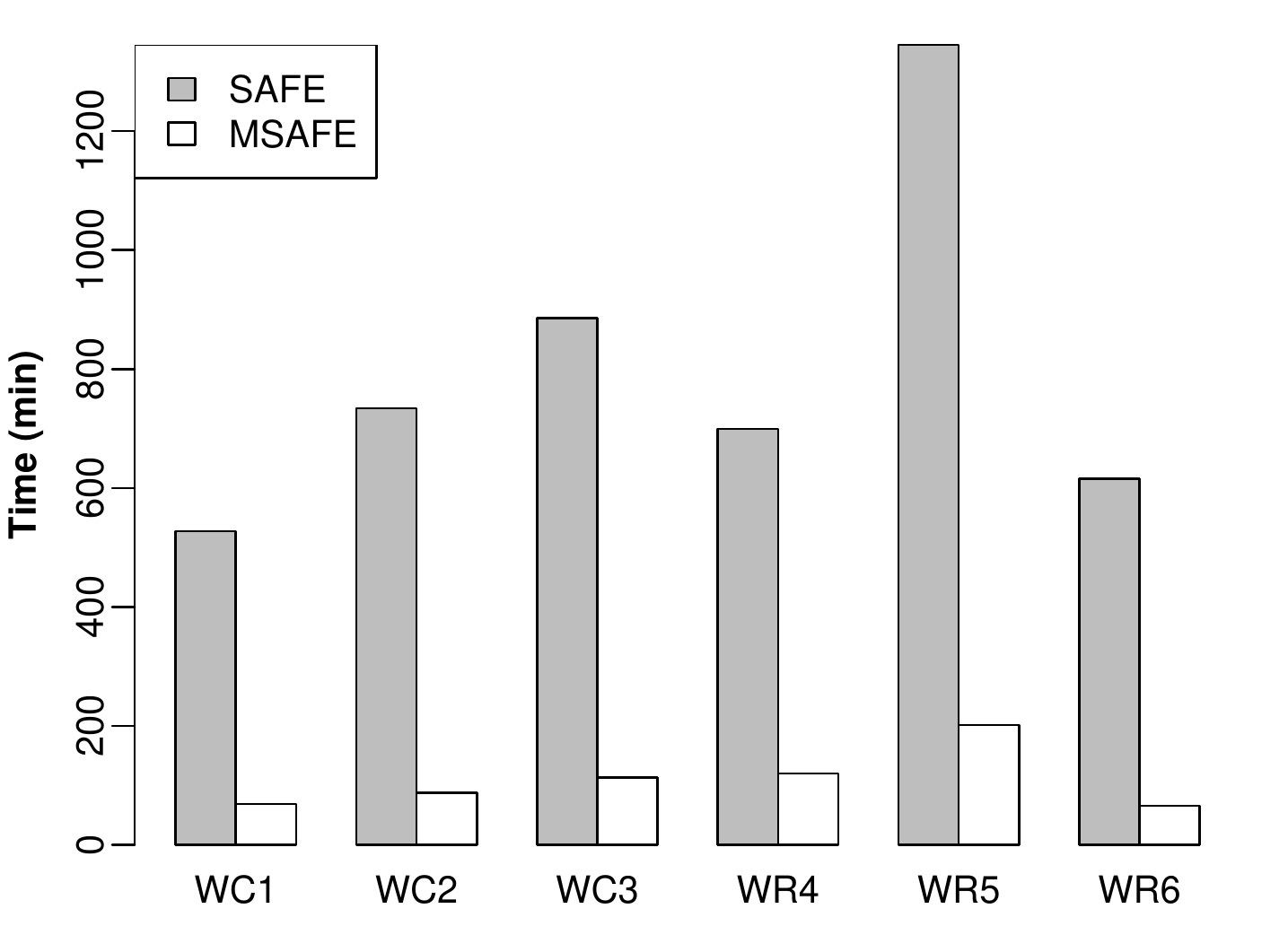}
    }\hfill
\end{figure}

\section{Simulation Study}

This section tests the robustness of SAFE and MSAFE in sensor selection against the impact of covariance misspecification, based on simulated data with correlated noises studied in \cite{Stallrich2020}. We use the data set FC3 and the corresponding estimated kernels $\hat\gamma_7$ and $\hat\gamma_{12}$ from SAFE method to generate the responses. Specially, we generate data by
\begin{equation}\label{eq:simu_gen}
    y_i=\sum_{k=7,12}\int_0^1X_k\kh{t_i-\delta \tau}\hat\gamma_k\kh{\tau,z_i}d\tau+\varepsilon_i,\qquad 1\leq i\leq N,
\end{equation}
where $X_k$'s and $t_i$'s are from data set FC3 in \cite{Stallrich2020}, $\hat\gamma_7$ and $\hat\gamma_{12}$ are estimated kernels of FC3 by SAFE method, $\{\varepsilon_i\}_{i=1}^N$ are zero-mean multivariate Gaussian noises with covariance matrix $\Sigma\in\bR^{N\times N}$ such that
\[
    \cov\zkh{\varepsilon_i,\varepsilon_j}=\sigma_h^2\zkh{\delta_{ij}+\theta\exp\kh{-\kh{i-j}^2/\eta^{2}}}.
\]
Here $\theta>0$ is related to the dominant sources of dependence, $\eta>0$ controls the correlation decay where larger values imply slower correlation decay, and $\sigma_h>0$ is chosen such that $\Sigma_{ii}=\sigma_h^2(1+\theta)$ equals the standard deviations of CV MSE means from the SAFE method on FC3 data set. The simulation experiment runs over factors $\theta\in\{0.25,10,100\}$ and $\eta\in\{10,100\}$, and $J=100$ individual sets of noises on each scenario. We use the same setting as \cite{Stallrich2020} and will compare the performance of MSAFE with SAFE. We set the number of stages $R=2$ for both methods.

For the generated $J$ different sets of noises, we will report the following quantities in \Cref{tab:simulation_SAFE}. 
\begin{itemize}
    \setlength\itemsep{0ex}
    \item   \textbf{Mean Size}: ${\sum}{}_{j=1}^{J}|\cK^2_j|/J$, where $\cK^2_j$ is the set of selected sensors after 2 stages for each $1\leq j\leq J$. We note that the ideal value is $2$ with $\{7,12\}$ as the ground-truth sensors for all data sets.
    \item   \textbf{Mean False Positive}: ${\sum}{}_{j=1}^{J}|\cK^2_j\setminus\{5,7,12\}|/J$. The ideal value is $0$.
    \item   \textbf{Mean CV MSE}: ${\sum}{}_{j=1}^{J}\text{MSE}_j/J$, where $\text{MSE}_j$ is the CV MSE for $j$-th data set.
    \item   \textbf{Mean Time}: ${\sum}{}_{j=1}^{J}T_j/J$, where $T_j$ is the time cost for $j$-th data set.
\end{itemize}
We also display in \Cref{fig:simulation_SAFE} the mean and standard deviation of the CV MSE of both SAFE and MSAFE methods with each scenario of $\theta$ and $\eta$.

We observe that both methods select the correct sensors 7 and 12 on all data sets. However in every setting of $\theta$ and $\eta$, MSAFE selects fewer sensors and less incorrect selections than SAFE. MSAFE method merely selects 45\%$\sim$65\% extra sensors as SAFE does; specially MSAFE selects only 8\%$\sim$23\% misspecified sensors than SAFE does. Moreover, MSAFE tends to have less prediction error in every scenario; specially, MSAFE has 70\%$\sim$80\% mean square error as the one of SAFE. Overall, MSAFE method is more robust against the covariance misspecification than SAFE. Finally, the computational time of MSAFE on those simulation data sets is always about 10\%$\sim$18\% of that of the SAFE method, which once again corroborates the stability and speed advantage of multiscale piecewise polynomial basis.

\begin{table}[t!]
    \centering\small
    \caption{Performance metrics across $100$ data sets for various covariance settings with simulation model \cref{eq:simu_gen}.}
    \label{tab:simulation_SAFE}
    \begin{tabular}{c|c|
    S[table-format = 1.2,scientific-notation=false,round-mode=places,round-precision=2,table-parse-only=false]|S[table-format = 1.2,scientific-notation=false,round-mode=places,round-precision=2,table-parse-only=false]|S[table-format = 1.2,scientific-notation=false,round-mode=places,round-precision=2,table-parse-only=false]|S[table-format = 1.2,scientific-notation=false,round-mode=places,round-precision=2,table-parse-only=false]|S|S|S[table-format = 3.2,scientific-notation=false,round-mode=places,round-precision=2,table-parse-only=false]|S[table-format = 2.2,scientific-notation=false,round-mode=places,round-precision=2,table-parse-only=false]}
        \multirow{2}{*}{$\theta$} & \multirow{2}{*}{$\eta$} & \multicolumn{2}{c|}{Mean Size}&\multicolumn{2}{c|}{Mean False Positive}&\multicolumn{2}{c|}{Mean CV MSE}&\multicolumn{2}{c}{Mean Time (min)}\\\cline{3-10}
        &&{SAFE}&{MSAFE}&{SAFE}&{MSAFE}&{SAFE}&{MSAFE}&{SAFE}&{MSAFE}\\\hline
        
\multirow{2}{*}{0.25}  & 10 & 3.61224489795918 & 2.35714285714286 & 1.59183673469388 & 0.357142857142857 & 0.00428712022360463 & 0.00334442398269391 & 164.141408163265 & 30.6631836734694\\
  & 100 & 3.89795918367347 & 2.41836734693878 & 1.86734693877551 & 0.418367346938776 & 0.00459344774833186 & 0.00363880089772183 & 169.819350340136 & 25.3407159863946\\
\hline\multirow{2}{*}{10}  & 10 & 3.60204081632653 & 2.13265306122449 & 1.55102040816327 & 0.13265306122449 & 0.00407762859154969 & 0.0029511640684577 & 163.064540816327 & 25.9250918367347\\
  & 100 & 4.85714285714286 & 2.37755102040816 & 2.72448979591837 & 0.377551020408163 & 0.00447410704166156 & 0.00354767632414209 & 189.712003401361 & 22.4634455782313\\
\hline\multirow{2}{*}{100}  & 10 & 3.75510204081633 & 2.20408163265306 & 1.68367346938776 & 0.204081632653061 & 0.00404043806799203 & 0.00283672644220101 & 168.376166666667 & 25.8468928571429\\
  & 100 & 5.19387755102041 & 2.38775510204082 & 2.97959183673469 & 0.387755102040816 & 0.00446734112151894 & 0.00348873383996535 & 200.888066326531 & 21.4969914965986
    \end{tabular}
\end{table}

\begin{figure}[t!]
    \centering
    \caption{Plots of CV MSE and time of SAFE and MSAFE methods across $100$ data sets for various covariance settings with simulation model \cref{eq:simu_gen}. The $x$-axis labels denote the pair $(\theta,\eta)$.}\label{fig:simulation_SAFE}
    \hfill\subfigure[CV MSE on simulated data.]{
        \includegraphics[width=0.45\linewidth]{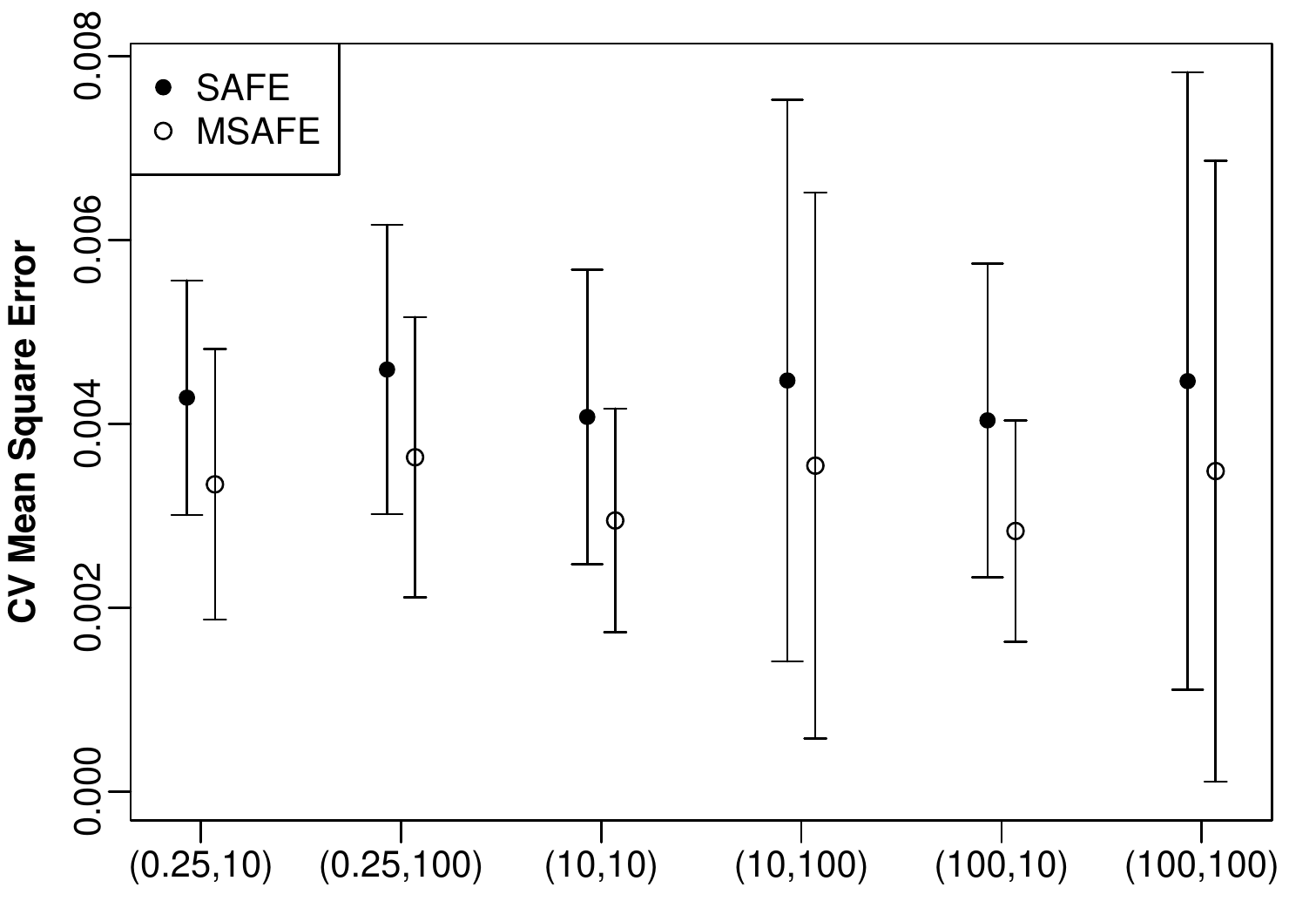}
    }\hfill
    \subfigure[Time on simulated data.]{
        \includegraphics[width=0.45\linewidth]{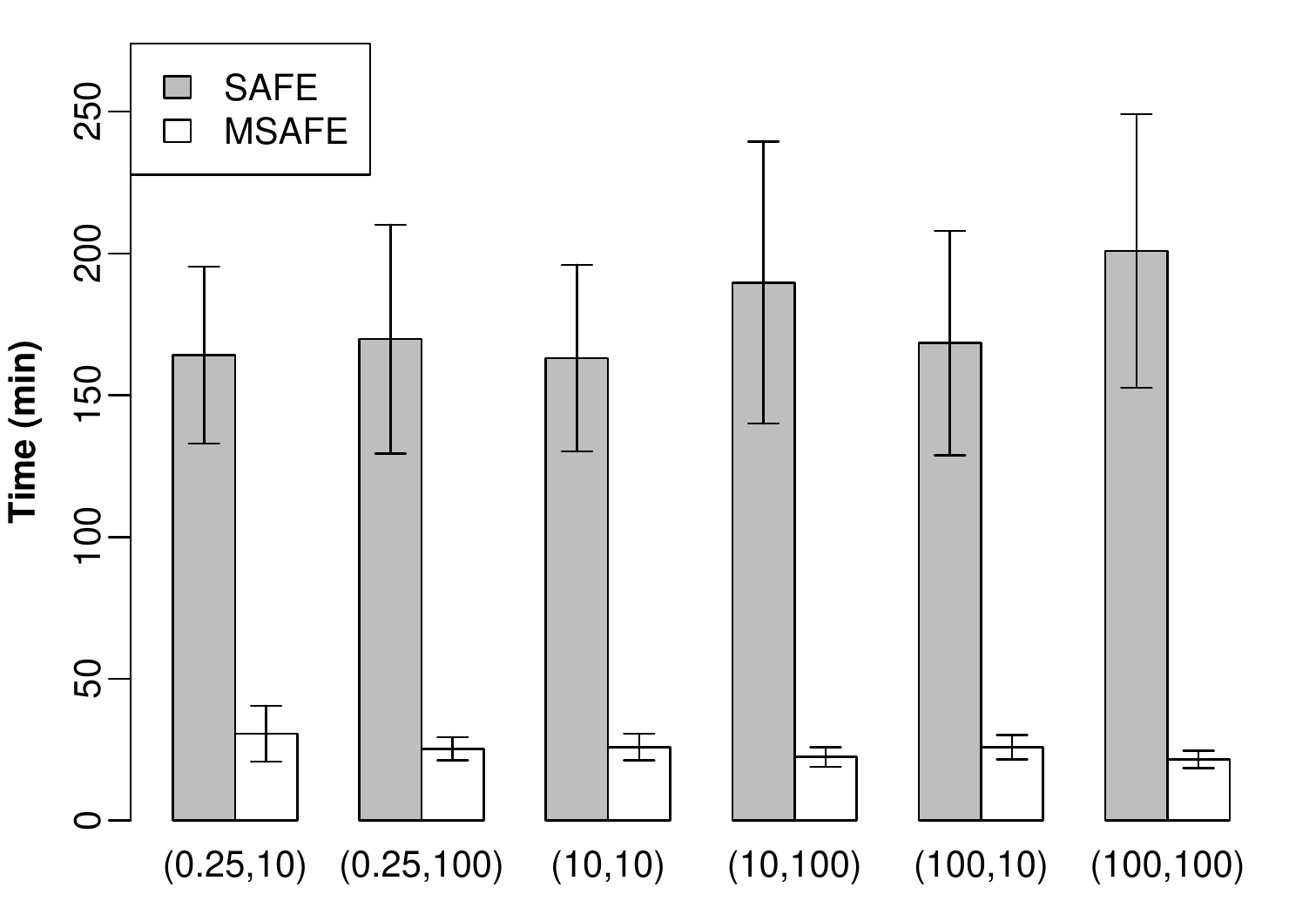}
    }\hfill
\end{figure}

\section{Conclusions}

To perform fast and precise algorithm for robotic prosthesis controllers, we propose MSAFE method based on SAFE, with the multiscale piecewise polynomial basis to discretize the integral operator in FLM. Multiscale basis systematically generates sparse coefficient matrices, accelerating the calculation and improving the stability of original SAFE method. Compared to single-scale spline basis SAFE method, MSAFE with multiscale basis costs only 10\%$\sim$15\% computational time on the hand movement data, while performing better sensor selection and comparable prediction accuracy. We also test the robustness of sensor selection for multi- and single-scale basis against correlation noise. Experiments on simulated data shows that with various patterns of correlated noise, MSAFE always has slighter misspecification and prediction error than SAFE. Both real-data experiments and simulation studies corroborate the efficiency and stability of the proposed MSAFE method.

\section*{Acknowledgement}

The authors are grateful to Professors Ana-Maria Staicu and Jonathan Stallrich of North Carolina State University for their generosity in providing the hand/wrist movement data and fruitful discussions.

%% The Appendices part is started with the command \appendix;
%% appendix sections are then done as normal sections
\appendix
\section{Multiscale Piecewise Polynomial Basis}\label{app}

In this appendix, we present the multiscale piecewise cubic polynomial basis on $\Omega\coloneqq[0,1]$ and its properties. For $n\geq 0$, we denote by $\bM_n^p$ the linear space of all piecewise polynomials of degree less than or equal to $p$, supported on $\Omega$ with nodes $\{i/2^n\}_{i=0}^{2^n}$. Abbreviation $\bM_n$ is used when the degree $p$ is clear from the context. By definition, we have the nestedness $\bM_{n}\subset\bM_{n+1}$. This enables us to define wavelet subspace $\bW_{n+1}\subset\bM_{n+1}$ such that $\bW_{n+1}\perp\bM_n$ in $L^2(\Omega)$ sense, and $\bM_{n+1}=\bM_n\oplus^\perp\bW_{n+1}$, where `$\oplus^\perp$' denotes the direct sum of two perpendicular spaces. With these spaces, the multiscale decomposition of function space $\bM_{n}$ could be represented by
\begin{equation}\label{eq:multi_dec}
    \bM_{n}=\bM_0\oplus^\perp\bW_1\oplus^\perp\bW_2\oplus^\perp\cdots\oplus^\perp\bW_n.
\end{equation}
Such a decomposition has a spectacular property that $\bW_n$ can be constructed based on $\bW_1$. To see this, define $\Phi\coloneqq\{\phi_0,\phi_1\}$ where $\phi_0\coloneqq\cdot/2$, $\phi_1\coloneqq(1+\cdot)/2$, and transformations of functions $f\in L^\infty(\Omega)$ such that
\[
    \cT_0 f\coloneqq f\circ\phi_0^{-1},\qquad \cT_1 f\coloneqq f\circ\phi_1^{-1}.
\]
One can learn from \cite{micchelli1994using} that
\[
    \bW_{n+1}=\cT_0\bW_n\oplus^\perp\cT_1\bW_n,\qquad\text{for all }n\in\bN.
\]
Moreover, if $W_n$ is a basis of $\bW_{n}$, then 
\[
    W_{n+1}\DEF\dkh{\cT_iw:w\in W_{n},i\in\dkh{0,1}}
\]
is a basis of $\bW_{n+1}$. This result reveals the relation between wavelet space $\bW_n$ and $\bW_1$, and a systematic way to generate basis $W_n$ of $\bW_n$ out from basis $W_1$ of $\bW_1$. \[
    \cT_e\DEF\cT_{e_1}\circ\cT_{e_2}\circ\cdots\circ\cT_{e_n}.
\]
Then for $n>1$ we have that
\begin{equation}\label{eq:multi_gen}
    \bW_n=\bigcup_{e\in\dkh{0,1}^{n-1}}\cT_e\bW_1,\qquad W_n=\bigcup_{e\in\dkh{0,1}^{n-1}}\cT_e W_1,
\end{equation}
and we would have $W_n$ as a basis of $\bW_n$.

The relation above not only serves as a systematical generator of basis functions for high levels, but also leads us to the estimation of coefficient matrix entries. The proposition following claims that as level $n$ increases, the entries of coefficient matrix $\mathsf{A}$ in \cref{eq:A} will decay exponentially. Define $C^p[0,1]$ as the space of all functions on $[0,1]$ possessing $p$-order continuous derivatives for $p\in\bN$, and for $f\in C[0,1]$, define $\|f\|_\infty\DEF\max_{t\in[0,1]}|f(t)|$.

%{\bf What is the meaning and the use of this proposition. It should be discussed.}

\begin{pps}\label{pps:ele_est}
    Suppose that $\{W_n\}_{n=1}^\infty$ is a sequence of multiscale piecewise polynomial bases of degree $p\geq 0$ generated by \cref{eq:multi_gen}. If $f\in C^p[0,1]$, then for any $w\in W_n$ with $n\geq 1$, there holds
    \begin{equation}\label{eq:ele_upbound}
        \norm{\int_0^1f\kh{\tau}w\kh{\tau}d\tau}\leq c_p\cdot2^{-\kh{p+1}n}\normm{f^{\kh{p}}}_\infty,
    \end{equation}
    with positive constant
    \[
        c_p\DEF\frac{\sqrt{2p+3}}{\kh{p+1}!2^{p+1}}\max_{v\in W_1}\normm{v}_{L^2[0,1]}.
    \]
\end{pps}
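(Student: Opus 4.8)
The plan is to establish the single-entry bound \cref{eq:ele_upbound} for an arbitrary $w\in W_n$ by exploiting exactly the two structural features the basis was built to have: its \emph{shrinking supports} and its \emph{vanishing moments}. First I would use the generation formula \cref{eq:multi_gen} to write $w=\cT_e v$ for some $v\in W_1$ and some multi-index $e\in\dkh{0,1}^{n-1}$. Since each elementary map $\cT_0,\cT_1$ halves the support and multiplies the $L^2$-norm by $1/\sqrt2$, applying $n-1$ of them shows that $w$ is supported on a single dyadic interval $I$ of length $h\DEF 2^{-(n-1)}$ and that $\normm{w}_{L^2[0,1]}=2^{-(n-1)/2}\normm{v}_{L^2[0,1]}\le 2^{-(n-1)/2}\max_{v\in W_1}\normm{v}_{L^2[0,1]}$. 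These two facts carry, respectively, the localization that shrinks the domain of integration to $I$ and the normalization that will supply the $\max_{v}\normm{v}_{L^2[0,1]}$ factor inside $c_p$.

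Next I would bring in the orthogonality $\bW_n\perp\bM_0$, which states that $\int_0^1 w\,q=0$ for every polynomial $q$ of degree at most $p$. Consequently $\int_0^1 f\,w=\int_I (f-q)\,w$ for any such $q$, and I am free to take $q$ to be the Taylor polynomial of $f$ of degree $p-1$ about a point $\tau_0\in I$; this lies in $\bM_0$, and its Lagrange remainder gives the pointwise estimate $\norm{f(\tau)-q(\tau)}\le \normm{f^{(p)}}_\infty\,\norm{\tau-\tau_0}^{p}/p!$ on $I$. This is precisely the step where the hypothesis $f\in C^p[0,1]$ is used and where the $p$-th derivative, rather than a higher one, appears on the right-hand side.

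I would then estimate $\norm{\int_I(f-q)\,w}$ by Cauchy--Schwarz as $\normm{f-q}_{L^2(I)}\,\normm{w}_{L^2(I)}$. The first factor is a one-line integral of the remainder bound over the interval $I$ of length $h$: it yields a power $h^{p+1/2}$ together with an explicit numerical constant coming from $p!$ and from $\int_I\norm{\tau-\tau_0}^{2p}\,d\tau$. The second factor is the rescaling bound $2^{-(n-1)/2}\max_{v}\normm{v}_{L^2[0,1]}$ from the first paragraph. Multiplying, the powers of two combine as $h^{p+1/2}\cdot 2^{-(n-1)/2}=2^{-(n-1)(p+1)}=2^{p+1}\cdot 2^{-(p+1)n}$, which is exactly the asserted decay rate $2^{-(p+1)n}$ up to a fixed factor; since every remaining constant is independent of $n$, absorbing them produces a bound of the form $c_p\,2^{-(p+1)n}\normm{f^{(p)}}_\infty$ with $c_p$ independent of $n$, as claimed.

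The genuinely delicate part is the \emph{sharp} constant $c_p=\tfrac{\sqrt{2p+3}}{(p+1)!\,2^{p+1}}\max_{v\in W_1}\normm{v}_{L^2[0,1]}$, not the mere order of decay, which the argument above already secures. The plain degree-$(p-1)$ Taylor estimate overshoots this constant, so the exact computation has to be pushed further: I expect one centers the expansion at the midpoint of $I$ (which accounts for the factor $2^{p+1}$ in the denominator) and evaluates the arising $L^2$-integral of a power of $\norm{\tau-\tau_0}$ against $w$ in closed form, the exact value of that integral being what generates the $\sqrt{2p+3}$ and the $(p+1)!$. Matching these explicit numbers is where the bookkeeping is heaviest; the structural skeleton — reduce to a single $w$, localize to $I$, subtract a Taylor polynomial, apply Cauchy--Schwarz, and rescale — is otherwise routine.
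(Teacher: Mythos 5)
Your skeleton coincides with the first half of the paper's own proof: the paper likewise uses \cref{eq:multi_gen} to write $w=v\kh{2^n\cdot-i}$ for some $v\in W_1$, rescales to extract the factor $2^{-n}$, and invokes orthogonality to $\bM_0$ to delete the Taylor polynomial of $f$. Your bookkeeping for this part (support of length $2^{-(n-1)}$, $\normm{w}_{L^2}=2^{-(n-1)/2}\normm{v}_{L^2}$, Lagrange remainder of the degree-$(p-1)$ Taylor polynomial, Cauchy--Schwarz) is correct, and it rigorously yields a bound $C\cdot2^{-(p+1)n}\normm{f^{(p)}}_\infty$ with $C$ independent of $n$. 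That weaker statement is, incidentally, all the paper needs downstream: the proof of \Cref{thm} only requires $n$-independence of the constant.

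The gap is the constant itself, which is part of the statement being proved. Carrying out your estimate with midpoint centering gives the constant $\frac{2}{p!\sqrt{2p+1}}\max_{v\in W_1}\normm{v}_{L^2[0,1]}$, which exceeds the claimed $c_p$ by the factor $(p+1)2^{p+2}/\sqrt{(2p+1)(2p+3)}$, roughly $2^{p+1}$, and about $16$ in the cubic case $p=3$ actually used in the paper. The obstruction is structural, not a matter of bookkeeping: once you bound the remainder pointwise by $\normm{f^{(p)}}_\infty\norm{\tau-\tau_0}^p/p!$ and apply Cauchy--Schwarz against $\normm{w}_{L^2(I)}$, the orthogonality of $w$ has been spent, and no choice of expansion point $\tau_0$ nor closed-form evaluation of $\int_I\norm{\tau-\tau_0}^{2p}d\tau$ can recover the loss; so the fix you sketch in your last paragraph would not converge to $c_p$. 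What the paper does after the Taylor step is genuinely different: it first integrates the remainder kernel to reduce the whole expression to the single moment $\norm{\int_0^1\tau^{p+1}v\kh{\tau}d\tau}$ (this is where the extra $1/(p+1)$, hence $(p+1)!$, appears), and then uses the vanishing moments a \emph{second} time --- after the affine change to $[-1,1]$ it replaces the monic monomial $\kh{t+1}^{p+1}$ by the monic Legendre polynomial $L_{p+1}$, the minimal-$L^2$-norm monic polynomial of that degree, and only then applies Cauchy--Schwarz; the explicit value of $\normm{L_{p+1}}_{L^2[-1,1]}$ together with $\binom{2p+2}{p+1}\geq 4^{p+1}/(2p+3)$ is what generates $\sqrt{2p+3}$ and the powers of $2$ in $c_p$. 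That second use of orthogonality is the idea missing from your plan. A caveat in the other direction: the paper's own reduction pulls $\normm{f^{(p)}}_\infty$ outside an absolute value while keeping the signed factor $v$ inside a double integral, a step that is not valid as written; your Cauchy--Schwarz route avoids that defect at the price of the larger constant, so a fully rigorous proof of the proposition with the stated $c_p$ would require repairing the paper's argument as well, not merely reproducing it.
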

\begin{proof}
    By \cref{eq:multi_gen}, for any $w\in W_{n+1}$ with $n\geq 0$ we have $w=v\kh{2^n\cdot-i}$ for some $v\in W_1$ and $0\leq i\leq2^n-1$. Then
    \begin{align*}
        \int_0^1f\kh{\tau}w\kh{\tau}d\tau&=\int_0^1f\kh{\tau}v\kh{2^n\tau-i}d\tau=2^{-n}\int_0^1f\kh{2^{-n}\kh{\tau+i}}v\kh{\tau}d\tau.
    \end{align*}
    On the other hand, notice that by Taylor expansion, we have
    \[
        f\kh{2^{-n}\kh{\tau+i}}=\sum_{l=0}^{p-1}\frac{f^{\kh{l}}\kh{i2^{-n}}}{l!2^{nl}}\tau^l+\int_0^\tau\frac{t^p}{p!2^{pn}}f^{\kh{p}}\kh{2^{-n}\kh{t+i}}dt.
    \]
    Therefore we have
    \begin{align*}
        \norm{\int_0^1f\kh{\tau}w\kh{\tau}d\tau}&=\frac{1}{p!2^{\kh{p+1}n}}\norm{\int_0^1\!\!\!\int_0^{\tau} t^pf^{\kh{p}}\kh{2^{-n}\kh{t+i}}v\kh{\tau}dtd\tau}\\
        &\leq\frac{\normm{f^{\kh{p}}}_\infty}{p!2^{\kh{p+1}n}}\norm{\int_0^1\!\!\!\int_0^\tau t^pv\kh{\tau}dtd\tau}\\
        &=\frac{\normm{f^{\kh{p}}}_\infty}{\kh{p+1}!2^{\kh{p+1}n}}\norm{\int_0^1\tau^{p+1}v\kh{\tau}d\tau}.
    \end{align*}
    Next we estimate the last integral. Notice the fact that monic Legendre polynomials $L_{p+1}$ of degree $p+1$ has the smallest $L^2$-norm among monic polynomials with the same degree on $[-1,1]$. Then since $v\in W_{1}$ has vanishing moment $p+1$, we have
    \begin{align*}
        \norm{\int_0^1\tau^{p+1}v\kh{\tau}d\tau}&=\frac{1}{2^{p+2}}\norm{\int_{-1}^1\kh{t+1}^{p+1}v\kh{\frac{t+1}{2}}dt}\\
        &=\frac{1}{2^{p+2}}\norm{\int_{-1}^1L_{p+1}\kh{t}v\kh{\frac{t+1}{2}}dt}.
    \end{align*}
    Then by Cauchy inequality,
    \begin{align*}
        \norm{\int_0^1\tau^{p+1}v\kh{\tau}d\tau}&\leq \frac{\sqrt{2}}{2^{p+2}}\normm{L_{p+1}}_{L^2\zkh{-1,1}}\normm{v}_{L^2\zkh{0,1}}\\
        &=\frac{\sqrt{2}}{2^{p+2}}\frac{2^{p+1}\kh{\kh{p+1}!}^2}{\kh{2p+2}!}\sqrt{\frac{2}{2p+3}}\normm{v}_{L^2\zkh{0,1}}\\
        &=\frac{\normm{v}_{L^2\zkh{0,1}}}{{\binom{2p+2}{p+1}}\sqrt{2p+3}}\leq\frac{\sqrt{2p+3}}{2^{2\kh{p+1}}}\normm{v}_{L^2\zkh{0,1}},
    \end{align*}
    where the $L^2$-norm of $L_{p+1}$ and the estimation of central binomial coefficient $\binom{2n}{n}\geq 4^n/(2n+1)$ are applied. Then combining the estimations above gives the final bound
    \[
        \norm{\int_0^1f\kh{\tau}w\kh{\tau}d\tau}\leq \frac{\sqrt{2p+3}}{\kh{p+1}!2^{\kh{p+1}\kh{n+2}}}\normm{f^{\kh{p}}}_\infty\max_{v\in W_1}\normm{v}_{L^2\zkh{0,1}},
    \]
    which is exactly the desired.
\end{proof}

Based on \Cref{pps:ele_est} now we are capable to prove \Cref{thm}, which could be used to determine the multiscale level needed for a certain accuracy level. Recall that the multiscale coefficient matrix for $n$ level is $A_n$, and the truncated $\tilde A^m_n$ of matrices $A_n$ for level $1\leq m\leq n$ is
\[
    \zkh{\tilde A^m_n}_{ij}\DEF\begin{cases}
        \zkh{A_n}_{ij},&1\leq j\leq 2^m\kh{p+1},\\
        0,&2^m\kh{p+1}<j\leq 2^n\kh{p+1}.
    \end{cases}
\]
Notice that for $n\geq m$, $A_m$ is exactly the first $2^m(p+1)$ columns of $\tilde A^m_n$. For matrix $A$, define $\|A\|_2$ and $\|A\|_F$ as the matrix $2$- and Frobenius-norm of $A$ respectively.
\begin{proof}[Proof of \Cref{thm}]
    By \Cref{pps:ele_est}, for all $n>m$ we have
    \begin{align*}
        \normm{A_n-\tilde A^m_n}_F^2&\leq c_p^2\normm{f^{\kh{p}}}_{\infty}^2\kh{p+1} N\sum_{i=m+1}^{n}2^{i-1}2^{-2\kh{p+1}i}\\
        &\leq c_p^2\normm{f^{\kh{p}}}_{\infty}^2\kh{p+1} N\sum_{i=m+1}^{\infty}2^{i-1}2^{-2\kh{p+1}i}\\
        &= c_p^2\normm{f^{\kh{p}}}_{\infty}^2 \frac{N\kh{p+1}}{\kh{2^{2p+2}-2}2^m}2^{-2pm}.
    \end{align*}
    Then notice for matrix $A$ there holds $\|A\|_2\leq\|A\|_F$, we have the desired inequality. Worthy to notice that with large $m$, another inequality $\|A\|_2\leq\sqrt{\|A\|_1\|A\|_\infty}$ yields a better bound.
\end{proof}

Now we focus on constructing multiscale basis of $\bM_2^3$. Inspired by \cite{chen2002fast,chen1999construction}, here we define points $T_0\coloneqq\{t_i\DEF (i+1)/5:i=0,1,2,3\}$, which satisfies $T_0\subset\Phi(T_0)$. Then we construct the basis of $\bM_0^3$ and $\bW_1^3$ as follows. Define $W_0\DEF\{w_{0i}\}_{i=0}^3\subset\bM_0^3$ such that $w_{0i}(t_j)=\delta_{ij}$ for $i,j=0,1,2,3$, which results in
\begin{align*}
    w_{00}\kh{x}&=
        -\frac{125}{6}x^3+\frac{75}{2}x^2-\frac{65}{3}x+4,\\
    w_{01}\kh{x}&=
        \frac{125}{2}x^3-100x^2+\frac{95}{2}x-6,\\
    w_{02}\kh{x}&=
        -\frac{125}{2}x^3+\frac{175}{2}x^2-35x+4,\\
    w_{03}\kh{x}&=
        \frac{125}{6}x^3-25x^2+\frac{55}{6}x-1.
\end{align*}
Clear that $W_0$ forms a basis of $\bM_0^3$. For wavelet space $\bW_1^3$, we require the basis $W_1\DEF\{w_{1i}:i=0,1,2,3\}\subset\bM_1^3$ consisting of functions with vanishing moment $4$, that is, $(w_{1i},w_{0j})=0$ for $i,j=0,1,2,3$. One possible basis could be
\begin{align*}
    w_{10}\kh{x}&=\begin{cases}
        \frac{1}{48}\kh{-920x^3+1080x^2-320x+19},&0\leq x< \frac{1}{2},\\
        \frac{1}{48}\kh{7080x^3-15720x^2+11360x-2669},&\frac{1}{2}\leq x\leq 1,
    \end{cases}\\
    w_{11}\kh{x}&=\begin{cases}
        \frac{1}{48}\kh{-23480x^3+15720x^2-2700x+91},&0\leq x< \frac{1}{2},\\
        \frac{1}{48}\kh{520x^3-1080x^2+660x-101},&\frac{1}{2}\leq x\leq 1,
    \end{cases}\\
    w_{12}\kh{x}&=\begin{cases}
        \frac{1}{48}\kh{-520x^3+480x^2-60x-1},&0\leq x< \frac{1}{2},\\
        \frac{1}{48}\kh{23480x^3-54720x^2+41700x-10369},&\frac{1}{2}\leq x\leq 1,
    \end{cases}\\
    w_{13}\kh{x}&=\begin{cases}
        \frac{1}{48}\kh{-7080x^3+5520x^2-1160x+51},&0\leq x< \frac{1}{2},\\
        \frac{1}{48}\kh{920x^3-1680x^2+920x-141},&\frac{1}{2}\leq x\leq 1,
    \end{cases}
\end{align*}
therefore we have $W_1$ as a basis of $\bW_1^3$. Then basis $W_n$ for level $n\geq 2$ could be obtained via relation \cref{eq:multi_gen}.

%% If you have bibdatabase file and want bibtex to generate the
%% bibitems, please use
%%
 \bibliographystyle{elsarticle-num} 
 \bibliography{EMG_placement}

\begin{thebibliography}{10}
\expandafter\ifx\csname url\endcsname\relax
  \def\url#1{\texttt{#1}}\fi
\expandafter\ifx\csname urlprefix\endcsname\relax\def\urlprefix{URL }\fi
\expandafter\ifx\csname href\endcsname\relax
  \def\href#1#2{#2} \def\path#1{#1}\fi

\bibitem{resnik2011development}
L.~Resnik, Development and testing of new upper-limb prosthetic devices:
  Research designs for usability testing, Journal of Rehabilitation Research \&
  Development 48~(6) (2011).

\bibitem{resnik2011using}
L.~Resnik, K.~Etter, S.~L. Klinger, C.~Kambe, Using virtual reality environment
  to facilitate training with advanced upper-limb prosthesis, Journal of
  Rehabilitation Research \& Development 48~(6) (2011).

\bibitem{Huang2010}
H.~Huang, F.~Zhang, Y.~L. Sun, H.~He, Design of a robust {{EMG}} sensing
  interface for pattern classification, J Neural Eng 7~(5) (2010) 056005.
\newblock

\bibitem{resnik2018pattern}
L.~Resnik, H.~H. Huang, A.~Winslow, D.~L. Crouch, F.~Zhang, N.~Wolk, Evaluation
  of {{EMG}} pattern recognition for upper limb prosthesis control: A case
  study in comparison with direct myoelectric control, Journal of
  neuroengineering and rehabilitation 15~(1) (2018) 1--13.

\bibitem{scheme2011electromyogram}
E.~Scheme, K.~Englehart, Electromyogram pattern recognition for control of
  powered upper-limb prostheses: State of the art and challenges for clinical
  use, Journal of Rehabilitation Research \& Development 48~(6) (2011).

\bibitem{Crouch2016}
D.~L. Crouch, H.~Huang, Lumped-parameter electromyogram-driven musculoskeletal
  hand model: {{A}} potential platform for real-time prosthesis control, J
  Biomech 49~(16) (2016) 3901--3907.
\newblock

\bibitem{Stallrich2020}
J.~Stallrich, M.~N. Islam, A.-M. Staicu, D.~Crouch, L.~Pan, H.~Huang, Optimal
  {{EMG}} placement for a robotic prosthesis controller with sequential,
  adaptive functional estimation ({{SAFE}}), The Annals of Applied Statistics
  14~(3) (2020) 1164--1181.
\newblock

\bibitem{tibshirani1996regression}
R.~Tibshirani,
  \href{http://links.jstor.org/sici?sici=0035-9246(1996)58:1<267:RSASVT>2.0.CO;2-G&origin=MSN}{Regression
  shrinkage and selection via the lasso}, J. Roy. Statist. Soc. Ser. B 58~(1)
  (1996) 267--288.

\bibitem{yuan2006model}
M.~Yuan, Y.~Lin, \href{https://doi.org/10.1111/j.1467-9868.2005.00532.x}{Model
  selection and estimation in regression with grouped variables}, J. R. Stat.
  Soc. Ser. B Stat. Methodol. 68~(1) (2006) 49--67.
\newblock

\bibitem{beylkin1991fast}
G.~Beylkin, R.~Coifman, V.~Rokhlin,
  \href{https://doi.org/10.1002/cpa.3160440202}{Fast wavelet transforms and
  numerical algorithms. {I}}, Comm. Pure Appl. Math. 44~(2) (1991) 141--183.
\newblock

\bibitem{micchelli1994using}
C.~A. Micchelli, Y.~Xu, \href{https://doi.org/10.1006/acha.1994.1024}{Using the
  matrix refinement equation for the construction of wavelets on invariant
  sets}, Appl. Comput. Harmon. Anal. 1~(4) (1994) 391--401.
\newblock

\bibitem{micchelli1997reconstruction}
C.~A. Micchelli, Y.~Xu,
  \href{https://doi.org/10.1023/A:1008264805830}{Reconstruction and
  decomposition algorithms for biorthogonal multiwavelets}, Multidimens.
  Systems Signal Process. 8~(1-2) (1997) 31--69.
\newblock

\bibitem{chen2002fast}
Z.~Chen, C.~A. Micchelli, Y.~Xu,
  \href{https://doi.org/10.1137/S0036142901389372}{Fast collocation methods for
  second kind integral equations}, SIAM J. Numer. Anal. 40~(1) (2002) 344--375.
\newblock

\bibitem{yang2015fast}
Y.~Yang, H.~Zou, \href{https://doi.org/10.1007/s11222-014-9498-5}{A fast
  unified algorithm for solving group-lasso penalize learning problems}, Stat.
  Comput. 25~(6) (2015) 1129--1141.
\newblock

\bibitem{Micchelli2013proximity}
C.~A. Micchelli, L.~Shen, Y.~Xu, X.~Zeng, Proximity algorithms for the
  {L}1/{TV} image denoising model, Advances in Computational Mathematics 38~(2)
  (2013) 401--426.
\newblock

\bibitem{li2015multi}
Q.~Li, L.~Shen, Y.~Xu, N.~Zhang, Multi-step fixed-point proximity algorithms
  for solving a class of optimization problems arising from image processing,
  Advances in Computational Mathematics 41~(2) (2015) 387--422.
\newblock

\bibitem{esser2010general}
E.~Esser, X.~Zhang, T.~F. Chan, A general framework for a class of first order
  primal-dual algorithms for convex optimization in imaging science, SIAM J.
  Imaging Sci. 3~(4) (2010) 1015--1046.
\newblock

\bibitem{Chambolle2011first}
A.~Chambolle, T.~Pock, A first-order primal-dual algorithm for convex problems
  with applications to imaging, Journal of Mathematical Imaging and Vision
  40~(1) (2011) 120--145.
\newblock

\bibitem{esser2009applications}
E.~Esser, Applications of lagrangian-based alternating direction methods and
  connections to split bregman, CAM Report 9 (2009) 31.

\bibitem{boyd2011distributed}
S.~Boyd, N.~Parikh, E.~Chu, B.~Peleato, J.~Eckstein, et~al., Distributed
  optimization and statistical learning via the alternating direction method of
  multipliers, Foundations and Trends in Machine Learning 3~(1) (2011) 1--122.

\bibitem{chen1999construction}
Z.~Chen, C.~A. Micchelli, Y.~Xu,
  \href{https://doi.org/10.1090/S0025-5718-99-01110-2}{A construction of
  interpolating wavelets on invariant sets}, Math. Comp. 68~(228) (1999)
  1569--1587.
\newblock

\bibitem{chen2015multiscale}
Z.~Chen, C.~A. Micchelli, Y.~Xu,
  \href{https://doi.org/10.1017/CBO9781316216637}{Multiscale methods for
  {F}redholm integral equations}, Vol.~28 of Cambridge Monographs on Applied
  and Computational Mathematics, Cambridge University Press, Cambridge, 2015.
\newblock

\bibitem{R}
{R Core Team}, R: {{A}} Language and Environment for Statistical Computing, {R
  Foundation for Statistical Computing}, {Vienna, Austria} (2021).

\bibitem{gglasso}
Y.~Yang, H.~Zou, S.~Bhatnagar, Gglasso: {{Group}} Lasso Penalized Learning
  Using a Unified {{BMD}} Algorithm (2020).

\bibitem{mitchell2019anatomy}
B.~Mitchell, L.~Whited, Anatomy, Shoulder and Upper Limb, Forearm Muscles,
  StatPearls, 2019.

\bibitem{cardot1999functional}
H.~Cardot, F.~Ferraty, P.~Sarda,
  \href{https://doi.org/10.1016/S0167-7152(99)00036-X}{Functional linear
  model}, Statist. Probab. Lett. 45~(1) (1999) 11--22.
\newblock

\bibitem{Cardot2003}
H.~Cardot, F.~Ferraty, P.~Sarda, Spline estimators for the functional linear
  model, Statist. Sinica 13~(3) (2003) 571--591.

\bibitem{Ferraty2012}
F.~Ferraty, W.~Gonz\'{a}lez-Manteiga, A.~Mart\'{\i}nez-Calvo, P.~Vieu,
  \href{https://doi.org/10.5705/ss.2010.085}{Presmoothing in functional linear
  regression}, Statist. Sinica 22~(1) (2012) 69--94.
\newblock

\bibitem{Goldsmith2011}
J.~Goldsmith, C.~M. Crainiceanu, B.~S. Caffo, D.~S. Reich, Penalized functional
  regression analysis of white-matter tract profiles in multiple sclerosis,
  NeuroImage 57~(2) (2011) 431--439.

\bibitem{Mclean2014}
M.~W. McLean, G.~Hooker, A.-M. Staicu, F.~Scheipl, D.~Ruppert,
  \href{https://doi.org/10.1080/10618600.2012.729985}{Functional generalized
  additive models}, J. Comput. Graph. Statist. 23~(1) (2014) 249--269.
\newblock

\bibitem{Wu2010}
Y.~Wu, J.~Fan, H.-G. M\"{u}ller,
  \href{https://doi.org/10.3150/09-BEJ231}{Varying-coefficient functional
  linear regression}, Bernoulli 16~(3) (2010) 730--758.
\newblock

\bibitem{malfait2003historical}
N.~Malfait, J.~O. Ramsay, \href{https://doi.org/10.2307/3316063}{The historical
  functional linear model}, Canad. J. Statist. 31~(2) (2003) 115--128.
\newblock

\bibitem{Harezlak2007}
J.~Harezlak, B.~A. Coull, N.~M. Laird, S.~R. Magari, D.~C. Christiani,
  \href{https://doi.org/10.1016/j.csda.2006.09.034}{Penalized solutions to
  functional regression problems}, Comput. Statist. Data Anal. 51~(10) (2007)
  4911--4925.
\newblock

\bibitem{Kim2011}
K.~Kim, D.~\c{S}ent\"{u}rk, R.~Li,
  \href{https://doi.org/10.1016/j.jspi.2010.11.003}{Recent history functional
  linear models for sparse longitudinal data}, J. Statist. Plann. Inference
  141~(4) (2011) 1554--1566.
\newblock

\bibitem{Brockhaus2017}
S.~Brockhaus, M.~Melcher, F.~Leisch, S.~Greven,
  \href{https://doi.org/10.1007/s11222-016-9662-1}{Boosting flexible functional
  regression models with a high number of functional historical effects}, Stat.
  Comput. 27~(4) (2017) 913--926.
\newblock

\bibitem{Rugamer2018}
D.~R\"{u}gamer, S.~Brockhaus, K.~Gentsch, K.~Scherer, S.~Greven,
  \href{https://doi.org/10.1111/rssc.12241}{Boosting factor-specific functional
  historical models for the detection of synchronization in bioelectrical
  signals}, J. R. Stat. Soc. Ser. C. Appl. Stat. 67~(3) (2018) 621--642.
\newblock

\bibitem{Meyer2021}
M.~J. Meyer, E.~J. Malloy, B.~A. Coull,
  \href{https://doi.org/10.1007/s11222-020-09981-3}{Bayesian wavelet-packet
  historical functional linear models}, Stat. Comput. 31~(2) (2021) Paper No.
  14, 13.
\newblock

\bibitem{Leeb2015}
H.~Leeb, B.~M. P{\"o}tscher, K.~Ewald, On {{Various Confidence Intervals
  Post}}-{{Model}}-{{Selection}}, Statistical Science 30~(2) (2015) 216--227.
\newblock

\bibitem{Zhao2020}
S.~Zhao, D.~Witten, A.~Shojaie, \href{https://doi.org/10.1214/20-sts815}{In
  {D}efense of the {I}ndefensible: {A} {V}ery {N}a\"{\i}ve {A}pproach to
  {H}igh-{D}imensional {I}nference}, Statist. Sci. 36~(4) (2021) 562--577.
\newblock

\bibitem{saad1990sparskit}
Y.~Saad, {{SPARSKIT}}: {{A}} Basic Tool Kit for Sparse Matrix Computations
  (1990).

\end{thebibliography}

%% else use the following coding to input the bibitems directly in the
%% TeX file.

% \begin{thebibliography}{00}

% %% \bibitem{label}
% %% Text of bibliographic item

% \bibitem{}

% \end{thebibliography}
\end{document}